%% file: main_plain.tex
\documentclass[11pt]{article}

\usepackage[T1]{fontenc}
\usepackage[utf8]{inputenc}
\usepackage{lmodern}
\usepackage[margin=1in]{geometry}
\usepackage{microtype}
\usepackage{amsmath,amssymb,amsthm,mathtools,bm}
\usepackage{booktabs,array,tabularx}
\usepackage{enumitem}
\usepackage[numbers,sort&compress]{natbib}
\usepackage[hidelinks]{hyperref}
\usepackage[nameinlink,noabbrev]{cleveref}

\setlist[enumerate]{leftmargin=.5in}
\setlist[itemize]{leftmargin=.5in}

\input{macros}

\allowdisplaybreaks
\numberwithin{equation}{section}

\theoremstyle{plain}
\newtheorem{theorem}{Theorem}[section]
\newtheorem{proposition}[theorem]{Proposition}
\newtheorem{lemma}[theorem]{Lemma}
\newtheorem{corollary}[theorem]{Corollary}

\theoremstyle{definition}
\newtheorem{assumption}[theorem]{Assumption}

\theoremstyle{remark}
\newtheorem{remark}[theorem]{Remark}

\crefname{theorem}{Theorem}{Theorems}
\Crefname{theorem}{Theorem}{Theorems}
\crefname{proposition}{Proposition}{Propositions}
\Crefname{proposition}{Proposition}{Propositions}
\crefname{lemma}{Lemma}{Lemmas}
\Crefname{lemma}{Lemma}{Lemmas}
\crefname{corollary}{Corollary}{Corollaries}
\Crefname{corollary}{Corollary}{Corollaries}
\crefname{remark}{Remark}{Remarks}
\Crefname{remark}{Remark}{Remarks}
\crefname{assumption}{Assumption}{Assumptions}
\Crefname{assumption}{Assumption}{Assumptions}
\crefname{equation}{equation}{equations}
\Crefname{equation}{Equation}{Equations}
\crefname{section}{Section}{Sections}
\Crefname{section}{Section}{Sections}
\crefname{subsection}{Section}{Sections}
\Crefname{subsection}{Section}{Sections}
\crefname{table}{Table}{Tables}
\Crefname{table}{Table}{Tables}

\newenvironment{keywords}{%
  \par\smallskip\noindent\textbf{Keywords. }%
}{\par\smallskip}

\newenvironment{MSCcodes}{%
  \par\smallskip\noindent\textbf{MSC2020. }%
}{\par\smallskip}

\title{Reinforcement Learning, Optimal Control, and Bayesian Filtering in Data Assimilation}
\author{Abed Hammoud\\
Civil and Environmental Engineering, Princeton University\\
\texttt{ah1389@princeton.edu}}
\date{\today}

\hypersetup{
  pdftitle={Reinforcement Learning, Optimal Control, and Bayesian Filtering in Data Assimilation},
  pdfauthor={Abed Hammoud}
}

\begin{document}

\maketitle

\begin{abstract}
We give a finite-horizon variational formulation that places Bayesian filtering and smoothing, variational data assimilation, KL-regularized control, and Kalman-type methods inside one mathematically explicit hierarchy. For a discrete-time hidden Markov model and any admissible one-step candidate law $q_t$, we prove $\mathcal J_t(q_t)=\mathbb E_{q_t}\!\left[-\log p(y_t\mid X_t)\right]+\KL(q_t\|p_t^f)=\KL(q_t\|p_t^a)-\log p(y_t\mid y_{0:t-1})$, and, for any admissible path law $q$, $\mathcal J_{\mathrm{path}}(q)=\mathbb E_q\!\left[-\sum_{t=0}^{T}\log p(y_t\mid X_t)\right]+\KL(q\|p(\xtraj))=\KL(q\|p(\xtraj\mid \ytraj))-\log p(\ytraj)$. These identities determine the evidence as the global infimum and make the analysis and smoothing posteriors the unique minimizers whenever those posterior laws belong to the admissible classes. This separates targets that are often conflated: strong- and weak-constraint 4D-Var are MAP estimators under the stated Gaussian assumptions; KL-regularized control recovers the Bayesian posterior only when the passive dynamics, likelihood cost, temperature, and a restrictive representability condition on the policy class are all matched correctly; and the linear-Gaussian specialization yields the Kalman analysis exactly. The ensemble Kalman filter then appears as a Gaussian and finite-ensemble approximation to the forecast-to-analysis map, exact only in the linear-Gaussian infinite-ensemble limit. This framework also clarifies RMSE-based RL data assimilation: such rewards may define effective estimators or pseudo-posteriors, but not exact posterior recovery unless they realize the likelihood-plus-KL objective. 
\end{abstract}

\begin{keywords}
data assimilation, Bayesian filtering, variational data assimilation, KL-regularized control, ensemble Kalman filtering
\end{keywords}

\begin{MSCcodes}
62F15, 62M20, 93E11, 93E20, 49L20
\end{MSCcodes}

\input{01_introduction}
\input{02_state_space}
\input{03_var_da}
\input{04_posterior_variational}
\input{05_rl_control}
\input{06_linear_gaussian}
\input{07_enkf}
\input{08_hammoud}
\input{10_conclusion}

\appendix
\input{appendix}

\bibliographystyle{plainnat}
\bibliography{refs}

\end{document}

%% file: macros.tex
\newcommand{\R}{\mathbb{R}}
\newcommand{\E}{\mathbb{E}}

\newcommand{\KL}{\operatorname{KL}}
\newcommand{\Cov}{\operatorname{Cov}}

\newcommand{\tr}{\operatorname{tr}}
\newcommand{\Id}{I}

\newcommand{\dd}{\,\mathrm d}

\newcommand{\argmin}{\operatorname*{arg\,min}}

\newcommand{\xtraj}{x_{0:T}}
\newcommand{\ytraj}{y_{0:T}}
\newcommand{\atraj}{a_{0:T-1}}
\newcommand{\Xtraj}{X_{0:T}}

\newcommand{\qpi}{q_\pi}
\newcommand{\norm}[1]{\left\lVert #1 \right\rVert}
\newcommand{\abs}[1]{\left\lvert #1 \right\rvert}
\newcommand{\inner}[2]{\left\langle #1, #2 \right\rangle}
\newcommand{\set}[1]{\left\{ #1 \right\}}
\newcommand{\bracks}[1]{\left[ #1 \right]}
\newcommand{\parens}[1]{\left( #1 \right)}

\newcommand{\ind}{\mathbf{1}}
\newcommand{\transpose}{^{\mathsf T}}

\newcommand{\mc}{\mathcal}

%% file: 01_introduction.tex
\section{Introduction}
\label{sec:introduction}

Data assimilation combines a dynamical model with partial and noisy observations in order to estimate the state of a system. In geophysical applications the problem is sequential, high dimensional, and often nonlinear. Three mathematical languages dominate the subject. The first is \emph{Bayesian filtering and smoothing}, where the target is a posterior law conditioned on data. The second is \emph{variational data assimilation}, where one minimizes a deterministic mismatch functional over states or trajectories. The third is the \emph{Kalman and ensemble Kalman} language, where one propagates means, covariances, and their ensemble approximations through forecast and analysis updates. These languages are closely related, but they are not interchangeable, and reinforcement-learning terminology can make the distinctions even harder to state precisely.

Historically, variational data assimilation emerged from optimal-control ideas and adjoint equations in meteorology \citep{ledimet1986,talagrandcourtier1987}, while Bayesian interpretations of numerical weather prediction analysis were made explicit by Lorenc \cite{lorenc1986}. Ensemble Kalman methods turned Gaussian filtering formulas into practical ensemble algorithms for large systems \citep{burgers1998,whitakerhamill2002,tippett2003,evensen2003}. Particle-filter viewpoints retained the goal of approximating the full nonlinear posterior while also clarifying the high-dimensional degeneracy barrier in geoscience data assimilation \citep{fearnheadkuensch2018,vanleeuwen2019}. On a different track, linearly solvable Markov decision processes, KL-control, and control-as-inference showed that entropy-regularized control objectives can be rewritten as inference or KL minimization problems \citep{toussaintstorkey2006,todorov2006,todorov2008,todorov2009,kappen2012,rawlik2012,levine2018}. Controlled interacting particle systems, feedback particle filters, and Schr\"odinger or optimal-transport viewpoints supplied further bridges between filtering, control, and data assimilation \citep{yang2013,taghvaeimehta2023,reich2019}. Recent reinforcement-learning-based data assimilation papers, including Hammoud et al.\ \cite{hammoud2024}, make these bridges practically salient, but also increase the temptation to transfer posterior-exact language to objectives that do not actually compute posteriors.

Our goal is conceptual rather than algorithmic. We do not propose a new approximate filter, prove finite-sample optimality of EnKF-type methods, or claim posterior recovery for generic RL rewards. Instead, we give a mathematically explicit synthesis of the exact finite-horizon statements that survive across these literatures and state explicitly where the usual slogans stop being theorems.

The contributions are the following.
\begin{enumerate}[label=\arabic*.]
\item \textbf{Posterior-as-variational theorems.} We prove exact one-step and path-space identities for hidden Markov models, \cref{thm:one-step-variational,thm:path-variational}, and then state the corresponding unique-minimizer conclusions under explicit posterior-admissibility assumptions.
\item \textbf{4D-Var as MAP, not posterior-law optimization.} Under the stated Gaussian assumptions, strong- and weak-constraint 4D-Var are negative-log-posterior minimization problems and therefore compute MAP estimators rather than posterior means or posterior laws; see \cref{thm:strong-4dvar,prop:weak-4dvar}.
\item \textbf{A mathematically honest control-as-inference statement.} KL-regularized control recovers the Bayesian posterior only when the passive dynamics, likelihood cost, temperature, and a restrictive representability condition on the admissible policy class are all matched correctly; see \cref{thm:gibbs-control,cor:policy-posterior,prop:soft-bellman}. Outside that calibrated setting one obtains either a different Gibbs law or a restricted-family projection.
\item \textbf{Kalman exactness and EnKF approximation.} In the linear-Gaussian case the unrestricted optimizer is the Kalman analysis posterior, and Gaussian minimization reproduces the classical mean and covariance formulas; see \cref{thm:kalman-posterior,prop:gaussian-functional}. EnKF-type methods then appear as Gaussian and finite-ensemble approximations to the forecast-to-analysis map, not as generic exact posterior solvers at finite ensemble size; see \cref{sec:enkf}.
\item \textbf{Clarification for RMSE-based RL data assimilation.} RMSE-style rewards can define useful estimation or control objectives, but they do not imply exact posterior recovery unless they realize the same likelihood-plus-KL structure as the Bayesian objective; see \cref{thm:rl-one-step,sec:hammoud}.
\end{enumerate}

The backbone of the paper is the pair of identities proved in \cref{thm:one-step-variational,thm:path-variational}. The one-step filtering identity reads
\[
\mathcal J_t(q_t)
=
\mathbb E_{q_t}\!\left[-\log p(y_t\mid X_t)\right]
+
\KL(q_t\|p_t^f)
=
\KL(q_t\|p_t^a)-\log p(y_t\mid y_{0:t-1}),
\]
for every admissible candidate analysis law $q_t$, while the path-space analogue is
\[
\mathcal J_{\mathrm{path}}(q)
=
\mathbb E_q\!\left[-\sum_{t=0}^{T}\log p(y_t\mid X_t)\right]
+
\KL(q\|p(\xtraj))
=
\KL(q\|p(\xtraj\mid \ytraj))-\log p(\ytraj).
\]
These identities identify the evidence as the variational infimum and make the posterior the unique minimizer whenever the posterior itself belongs to the admissible family. Restricting that family then recovers familiar approximations: MAP in the point-mass or zero-variance limit, Gaussian variational approximations in Gaussian families, and ensemble approximations when the posterior update is replaced by sample-based Gaussian surrogates. Core notation is summarized in \cref{tab:notation}.

Four qualifications are essential for the rest of the paper. First, the minimizer statements in \cref{sec:variational-posterior,sec:rl-control} require posterior or Gibbs admissibility in addition to mere normalization of the evidence. Second, exact posterior recovery by a policy class is a reachability statement on state-action laws and is therefore not automatic under fixed transition kernels. Third, EnKF exactness means exactness only in the linear-Gaussian infinite-ensemble regime; finite-ensemble stochastic and deterministic square-root filters remain Gaussian or ensemble approximations. Fourth, except for the deterministic strong-constraint formulation in \cref{sec:4dvar}, the main theorems are written in density form for finite-dimensional Euclidean state spaces.

The results below are intentionally finite-horizon and discrete-time. This keeps the main theorems exact and transparent, while still covering the standard filtering, smoothing, 4D-Var, Kalman, and EnKF settings used in data assimilation. \Cref{sec:state-space,sec:4dvar,sec:variational-posterior} establish the notation and the posterior-as-variational identities; \cref{sec:rl-control,sec:linear-gaussian,sec:enkf} develop the control, Kalman, and EnKF specializations; 
\cref{sec:hammoud,sec:conclusion} then interpret RMSE-based RL data assimilation, and close the argument. Continuous-time remarks are deferred to the appendix.

%% file: 02_state_space.tex
\section{State-space models, filtering, smoothing, and notation}
\label{sec:state-space}

We work on a fixed finite horizon $\set{0,\dots,T}$. Uppercase letters denote random variables, and lowercase letters denote realizations. For simplicity, all hidden states and observations take values in Euclidean spaces. Except where explicitly noted otherwise, notably the deterministic strong-constraint model in \cref{sec:4dvar}, the probability laws and conditional laws used in the filtering, smoothing, variational-law, Kalman, EnKF, and RL sections admit Borel densities with respect to Lebesgue measure. This covers the standard finite-dimensional hidden-Markov-model setting used in the density-based parts of the paper. All logarithms are natural logarithms.

\begin{assumption}[Discrete-time hidden Markov model]
\label{ass:ssm}
Let $X_t\in\R^n$ and $Y_t\in\R^m$. The hidden process and observation process satisfy
\begin{align}
X_0 &\sim p(x_0), \label{eq:ssm-initial}\\
X_{t+1}\mid X_t=x_t &\sim p(x_{t+1}\mid x_t), \qquad t=0,\dots,T-1, \label{eq:ssm-transition}\\
Y_t\mid X_t=x_t &\sim p(y_t\mid x_t), \qquad t=0,\dots,T. \label{eq:ssm-observation}
\end{align}
Conditionally on $\Xtraj$, the observations are independent, and $X$ is Markov. For the realized observation sequence $\ytraj$, the one-step evidence
\begin{equation}
Z_t := p(y_t\mid y_{0:t-1}) = \int p(y_t\mid x_t)p_t^f(x_t)\dd x_t,
\end{equation}
and the path-space evidence
\begin{equation}
Z_{0:T} := p(\ytraj) = \int p(\xtraj)\prod_{t=0}^{T} p(y_t\mid x_t)\dd \xtraj,
\end{equation}
exist and satisfy $0<Z_t<\infty$ and $0<Z_{0:T}<\infty$.
\end{assumption}

The prior path density induced by \cref{eq:ssm-initial,eq:ssm-transition} is
\begin{equation}
p(\xtraj) = p(x_0)\prod_{t=0}^{T-1} p(x_{t+1}\mid x_t).
\label{eq:path-prior}
\end{equation}
The corresponding smoothing posterior is
\begin{equation}
p(\xtraj\mid \ytraj)
=
\frac{1}{p(\ytraj)}
p(\xtraj)\prod_{t=0}^{T} p(y_t\mid x_t).
\label{eq:path-posterior}
\end{equation}

For filtering, we use the standard forecast and analysis densities
\begin{align}
p_t^f(x_t) &:= p(x_t\mid y_{0:t-1}), \label{eq:forecast-density}\\
p_t^a(x_t) &:= p(x_t\mid y_{0:t}). \label{eq:analysis-density}
\end{align}
The superscripts ``$f$'' and ``$a$'' refer to \emph{forecast} and \emph{analysis}. The distribution $p_t^a$ is the one-step filtering posterior after assimilating observation $y_t$.

\begin{table}[t]
\centering
\caption{Core notation used throughout the manuscript.}
\label{tab:notation}
\begin{tabularx}{\textwidth}{@{}>{\raggedright\arraybackslash}p{0.25\textwidth}X@{}}
\toprule
Symbol or term & Meaning \\
\midrule
$\xtraj$, $\ytraj$ & State trajectory and realized observation sequence on the time window $\set{0,\dots,T}$. \\
$p_t^f$, $p_t^a$ & Forecast density $p(x_t\mid y_{0:t-1})$ and analysis posterior $p(x_t\mid y_{0:t})$. \\
$q_t$ & Admissible candidate analysis law at time $t$. \\
$p(\xtraj)$, $p(\xtraj\mid \ytraj)$ & Prior path density and smoothing posterior on trajectories. \\
$\qpi$ & Policy-induced state-action path law under the passive dynamics; see \cref{eq:policy-path-law}. \\
forecast / analysis & Prediction before assimilating $y_t$ / posterior update after assimilating $y_t$. \\
passive dynamics & Reference uncontrolled dynamics, or reference state-action dynamics, relative to which KL regularization is measured. \\
MAP & Maximum a posteriori estimator, that is, any maximizer of a posterior density. \\
\bottomrule
\end{tabularx}
\end{table}

\begin{proposition}[Forecast-analysis recursion]
\label{prop:forecast-analysis}
Under \cref{ass:ssm}, the forecast and analysis densities obey
\begin{align}
p_0^f(x_0)
&=
p(x_0), \label{eq:forecast-initial}\\
p_t^f(x_t)
&=
\int p(x_t\mid x_{t-1})p_{t-1}^a(x_{t-1})\dd x_{t-1},
\qquad t=1,\dots,T,
\label{eq:forecast-recursion}\\
p_t^a(x_t)
&=
\frac{p(y_t\mid x_t)p_t^f(x_t)}{\int p(y_t\mid z_t)p_t^f(z_t)\dd z_t}
=
\frac{p(y_t\mid x_t)p_t^f(x_t)}{p(y_t\mid y_{0:t-1})}.
\label{eq:analysis-recursion}
\end{align}
\end{proposition}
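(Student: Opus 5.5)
We derive all three identities from the joint density factorization implied by \cref{ass:ssm}, together with Bayes' rule and marginalization. For every $t$ we use
\[
p(x_{0:t},y_{0:t})=p(x_0)\prod_{s=0}^{t-1}p(x_{s+1}\mid x_s)\prod_{s=0}^{t}p(y_s\mid x_s),
\]
and read off two conditional-independence facts by integrating out the irrelevant coordinates:
\[
p(x_t\mid x_{t-1},y_{0:t-1})=p(x_t\mid x_{t-1}),
\qquad
p(y_t\mid x_t,y_{0:t-1})=p(y_t\mid x_t).
\]
All integrands are nonnegative, so Tonelli's theorem justifies every interchange of integration without further integrability hypotheses.

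\textbf{Initial condition.} The case \cref{eq:forecast-initial} is immediate. With $t=0$ the conditioning set $y_{0:-1}$ is empty, so $p_0^f(x_0)=p(x_0)$ by definition \cref{eq:forecast-density}.

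\textbf{Forecast step.} For \cref{eq:forecast-recursion} with $t\ge 1$, we marginalize over $x_{t-1}$ in the conditional law given $y_{0:t-1}$:
\[
p(x_t\mid y_{0:t-1})=\int p(x_t\mid x_{t-1},y_{0:t-1})\,p(x_{t-1}\mid y_{0:t-1})\dd x_{t-1}.
\]
The first factor equals $p(x_t\mid x_{t-1})$ by the first conditional-independence fact. To verify that fact directly, integrate the joint density over $x_{0:t-2}$ and over the future coordinates; the remaining dependence on $x_t$ enters only through $p(x_t\mid x_{t-1})$ once $y_{t}$ is excluded. The second factor is $p_{t-1}^a$ by \cref{eq:analysis-density}, which gives \cref{eq:forecast-recursion}.

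\textbf{Analysis step.} For \cref{eq:analysis-recursion} we apply Bayes' rule conditionally on $y_{0:t-1}$:
\[
p(x_t\mid y_{0:t})=\frac{p(y_t\mid x_t,y_{0:t-1})\,p(x_t\mid y_{0:t-1})}{p(y_t\mid y_{0:t-1})}.
\]
The second conditional-independence fact reduces the first factor to $p(y_t\mid x_t)$; it holds because $Y_t$ depends on the path only through $X_t$. The denominator is the marginal of the numerator, namely $\int p(y_t\mid z_t)p_t^f(z_t)\dd z_t=Z_t$. \Cref{ass:ssm} guarantees $0<Z_t<\infty$, so the quotient is a well-defined probability density. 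This yields both expressions in \cref{eq:analysis-recursion}.

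\textbf{Main obstacle.} The only delicate point is rigor about the conditional densities. They are defined only Lebesgue-almost everywhere, and conditioning on $y_{0:t-1}$ requires the evidences $p(y_{0:t-1})$ to be positive and finite. We handle this by defining every conditional density explicitly as a ratio of integrals of the joint density at the realized $\ytraj$. Positivity of the intermediate evidences $p(y_{0:t})=\prod_{s\le t}Z_s$ then follows inductively from $Z_s\in(0,\infty)$. With this convention, each conditional-independence identity becomes an algebraic cancellation of common factors in numerator and denominator, checked by Tonelli's theorem.
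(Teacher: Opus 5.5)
Your proposal is correct and follows the same route as the paper: the empty-history convention at $t=0$, the Chapman--Kolmogorov prediction step using $p(x_t\mid x_{t-1},y_{0:t-1})=p(x_t\mid x_{t-1})$, and Bayes' rule conditional on $y_{0:t-1}$ with $p(y_t\mid x_t,y_{0:t-1})=p(y_t\mid x_t)$. Your extra bookkeeping is a sound refinement of details the paper leaves implicit: you fix versions of the conditional densities from the joint factorization at the realized data, and you check inductively that $p(y_{0:t})=\prod_{s\le t}Z_s\in(0,\infty)$, so that each $p_t^f$ is well defined.
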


\begin{proof}
\Cref{eq:forecast-initial} is the empty-history forecast. For $t\ge 1$, \cref{eq:forecast-recursion} is the Chapman--Kolmogorov prediction step. For \cref{eq:analysis-recursion}, Bayes' rule gives
\[
p(x_t\mid y_{0:t})
=
\frac{p(y_t\mid x_t,y_{0:t-1})p(x_t\mid y_{0:t-1})}{p(y_t\mid y_{0:t-1})}.
\]
By conditional independence, $p(y_t\mid x_t,y_{0:t-1})=p(y_t\mid x_t)$, which yields the stated formula.
\end{proof}

Two distinctions will be used repeatedly.
\begin{enumerate}[label=\arabic*.]
\item The \emph{analysis posterior} $p_t^a$ is a single-time filtering law. The \emph{smoothing posterior} $p(\xtraj\mid \ytraj)$ is a law on full trajectories.
\item A \emph{posterior mean} and a \emph{MAP estimator} are point summaries of a posterior law. They need not coincide, and neither summary determines the full posterior distribution in general.
\item A \emph{Gaussian variational approximation} is the minimizer of a restricted optimization over Gaussian laws, whereas a \emph{finite-ensemble approximation} is a random empirical measure built from finitely many particles. Neither object should be called the posterior unless a separate theorem identifies it with the posterior law.
\end{enumerate}

We now turn to variational data assimilation, which fits into this notation as an optimization of a negative log-posterior density under additional structural assumptions.

%% file: 03_var_da.tex
\section{Variational data assimilation and 4D-Var as MAP}
\label{sec:4dvar}

Variational data assimilation is often presented as a deterministic optimization method. Under the Gaussian background, model-error, and observation-error assumptions stated below, however, its objective is exactly a negative log-posterior \citep{lorenc1986,zupanski1997}. That is the precise sense in which 4D-Var is Bayesian. The resulting minimizer is a MAP estimator, not in general a posterior mean and not the posterior law itself. Outside these assumptions, the same deterministic objective may remain useful, but the MAP interpretation no longer follows automatically.

This section also marks the main exception to the density-based setup from \cref{sec:state-space}. In strong-constraint 4D-Var, the deterministic dynamics place the trajectory law on the model manifold generated by the initial condition, so the optimization is over $x_0$ or over trajectories constrained to that manifold rather than over Lebesgue densities on the full path space.

\subsection{Strong-constraint 4D-Var}

Assume that the model is perfect inside the assimilation window:
\begin{equation}
x_{t+1} = \mathcal M_t(x_t), \qquad t=0,\dots,T-1,
\label{eq:strong-model}
\end{equation}
so the full trajectory is determined by the initial state $x_0$. Let $\mathcal M_{0:t}$ denote the model composition mapping $x_0$ to $x_t$. Assume further that
\begin{align}
X_0 &\sim \mathcal N(x_b,B), \label{eq:background-law}\\
Y_t &= \mathcal H_t(X_t) + \eta_t, \qquad \eta_t\sim \mathcal N(0,R_t), \qquad t=0,\dots,T,
\label{eq:obs-operator}
\end{align}
with $B$ and each $R_t$ symmetric positive definite, and with independent observation errors.

\begin{theorem}[Strong-constraint 4D-Var objective as negative log-posterior]
\label{thm:strong-4dvar}
Under \cref{eq:strong-model,eq:background-law,eq:obs-operator}, the posterior density of $x_0$ given $\ytraj$ satisfies
\begin{equation}
p(x_0\mid \ytraj)
\propto
\exp\parens{-J_{\mathrm{sc}}(x_0)},
\end{equation}
where
\begin{equation}
J_{\mathrm{sc}}(x_0)
:=
\frac12 (x_0-x_b)\transpose B^{-1}(x_0-x_b)
\;+\;
\frac12\sum_{t=0}^{T}
\parens{y_t-\mathcal H_t(\mathcal M_{0:t}(x_0))}\transpose
R_t^{-1}
\parens{y_t-\mathcal H_t(\mathcal M_{0:t}(x_0))},
\label{eq:strong-4dvar-cost}
\end{equation}
Consequently, any minimizer of $J_{\mathrm{sc}}$ is a MAP estimator of the initial condition, and the corresponding trajectory $\set{\mathcal M_{0:t}(x_0)}_{t=0}^{T}$ is the associated strong-constraint MAP trajectory on the deterministic model manifold.
\end{theorem}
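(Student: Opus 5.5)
The plan is to apply Bayes' rule in the single variable $x_0$, the only random quantity in the deterministic setting. The key steps are to write down the likelihood $p(\ytraj\mid x_0)$ and to multiply it by the Gaussian background density. Because \cref{eq:strong-model} makes $X_t=\mathcal M_{0:t}(X_0)$ almost surely, conditioning on $X_0=x_0$ fixes the whole trajectory. By \cref{eq:obs-operator}, the observations are then $Y_t=\mathcal H_t(\mathcal M_{0:t}(x_0))+\eta_t$ with independent Gaussian errors. I will state explicitly that the errors $\eta_{0:T}$ are independent of $X_0$; this is the standing reading of \cref{eq:obs-operator} and is needed for the conditional law to be as claimed. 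Under this reading, the conditional density of $\ytraj$ given $x_0$ is a product of Gaussians,
\[
p(\ytraj\mid x_0)=\prod_{t=0}^{T}(2\pi)^{-m/2}(\det R_t)^{-1/2}\exp\parens{-\tfrac12 \parens{y_t-\mathcal H_t(\mathcal M_{0:t}(x_0))}\transpose R_t^{-1}\parens{y_t-\mathcal H_t(\mathcal M_{0:t}(x_0))}}.
\]

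Next I multiply by the background density $p(x_0)\propto\exp\parens{-\tfrac12(x_0-x_b)\transpose B^{-1}(x_0-x_b)}$ from \cref{eq:background-law}. The product is $\exp(-J_{\mathrm{sc}}(x_0))$ times a constant depending only on $B$, the $R_t$, and the dimensions. For Bayes' rule $p(x_0\mid\ytraj)=p(\ytraj\mid x_0)p(x_0)/p(\ytraj)$, I must check that the evidence $p(\ytraj)=\int p(\ytraj\mid x_0)p(x_0)\dd x_0$ lies in $(0,\infty)$. Finiteness holds because the integrand is bounded by a constant times the Gaussian prior density. Positivity holds because the integrand is strictly positive. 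This only requires $x_0\mapsto\mathcal H_t(\mathcal M_{0:t}(x_0))$ to be Borel measurable, which I will assume as part of the model setup. Dividing by the evidence gives the proportionality $p(x_0\mid\ytraj)\propto\exp(-J_{\mathrm{sc}}(x_0))$, with an $x_0$-independent constant.

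The consequence follows from monotonicity: since $u\mapsto e^{-u}$ is strictly decreasing, maximizers of $p(\cdot\mid\ytraj)$ are exactly minimizers of $J_{\mathrm{sc}}$. So any minimizer is a MAP estimator in the sense of \cref{tab:notation}. For the trajectory statement, the posterior law on paths is the pushforward of $p(x_0\mid\ytraj)$ under the map $x_0\mapsto(\mathcal M_{0:t}(x_0))_{t=0}^T$. It is therefore supported on the model manifold, which is why the MAP trajectory is defined as the image of the MAP initial condition, not via a Lebesgue density on path space.

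The main obstacle is not computational but one of care in the last step. The path law has no density on $\R^{n(T+1)}$, so "MAP trajectory" can only mean the image of the MAP initial condition, and I will phrase it that way. I will also avoid asserting existence or uniqueness of a minimizer. Existence would need continuity of $\mathcal H_t\circ\mathcal M_{0:t}$ plus coercivity, which is supplied by the background term. The theorem only claims that any minimizer is a MAP estimator, so no existence argument is required.
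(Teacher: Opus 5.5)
Your proposal is correct and follows the paper's route. You factorize $p(\ytraj\mid x_0)$ through the deterministic composition $\mathcal M_{0:t}$, multiply by the Gaussian background, apply Bayes' rule, read off MAP by monotonicity, and take the MAP trajectory as the image of the MAP initial condition on the model manifold; your added checks (evidence in $(0,\infty)$, independence of $\eta_{0:T}$ from $X_0$, measurability of $x_0\mapsto\mathcal H_t(\mathcal M_{0:t}(x_0))$) are sound details that the paper leaves implicit.
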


\begin{proof}
Because the model is deterministic, the likelihood of the observed sequence conditioned on $x_0$ factorizes as
\[
p(\ytraj\mid x_0)
=
\prod_{t=0}^{T} p\parens{y_t\mid \mathcal M_{0:t}(x_0)}.
\]
Under Gaussian observation errors,
\[
p\parens{y_t\mid \mathcal M_{0:t}(x_0)}
\propto
\exp\!\left(
-\frac12
\parens{y_t-\mathcal H_t(\mathcal M_{0:t}(x_0))}\transpose
R_t^{-1}
\parens{y_t-\mathcal H_t(\mathcal M_{0:t}(x_0))}
\right).
\]
The Gaussian background density satisfies
\[
p(x_0)
\propto
\exp\!\left(
-\frac12 (x_0-x_b)\transpose B^{-1}(x_0-x_b)
\right).
\]
Multiplying prior and likelihood and applying Bayes' rule yields the claimed posterior density with respect to Lebesgue measure on $\R^n$. Since the posterior density is proportional to $\exp(-J_{\mathrm{sc}}(x_0))$, maximizing the posterior over the support of that density is equivalent to minimizing $J_{\mathrm{sc}}$. The associated trajectory statement is the induced MAP statement on the deterministic model manifold $\set{(\mathcal M_{0:t}(x_0))_{t=0}^{T}:x_0\in\R^n}$.
\end{proof}

\begin{remark}[When the minimizer is a MAP estimator]
\label{rem:map-conditions}
\Cref{thm:strong-4dvar} is an exact MAP statement provided the posterior admits the displayed density on the admissible state space and the optimizer is sought over that support. If the posterior density has several maximizers, the MAP estimator is nonunique. If the posterior is multimodal or strongly skewed, the MAP point can be far from the posterior mean.
\end{remark}

\subsection{Weak-constraint 4D-Var}

Weak-constraint 4D-Var permits model error inside the window \citep{zupanski1997}. Assume
\begin{align}
X_0 &\sim \mathcal N(x_b,B), \\
X_{t+1} &= \mathcal M_t(X_t) + \xi_t, \qquad \xi_t\sim \mathcal N(0,Q_t), \qquad t=0,\dots,T-1, \\
Y_t &= \mathcal H_t(X_t) + \eta_t, \qquad \eta_t\sim \mathcal N(0,R_t), \qquad t=0,\dots,T,
\end{align}
with all noises mutually independent and each $Q_t$, $R_t$ positive definite.

\begin{proposition}[Weak-constraint 4D-Var objective as negative log-posterior]
\label{prop:weak-4dvar}
Under the weak-constraint model above, the trajectory posterior satisfies
\begin{equation}
p(\xtraj\mid \ytraj)
\propto
\exp\parens{-J_{\mathrm{wc}}(\xtraj)},
\end{equation}
where
\begin{align}
J_{\mathrm{wc}}(\xtraj)
:={}&
\frac12 (x_0-x_b)\transpose B^{-1}(x_0-x_b)
\nonumber\\
&\quad
+\frac12\sum_{t=0}^{T-1}
\parens{x_{t+1}-\mathcal M_t(x_t)}\transpose
Q_t^{-1}
\parens{x_{t+1}-\mathcal M_t(x_t)}
\nonumber\\
&\quad
+\frac12\sum_{t=0}^{T}
\parens{y_t-\mathcal H_t(x_t)}\transpose
R_t^{-1}
\parens{y_t-\mathcal H_t(x_t)},
\label{eq:weak-4dvar-cost}
\end{align}
Hence weak-constraint 4D-Var computes a MAP trajectory under the stated Gaussian assumptions.
\end{proposition}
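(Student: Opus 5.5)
The plan is to run the weak-constraint analogue of the argument for \cref{thm:strong-4dvar}, now on the full path space $\R^{n(T+1)}$ with Lebesgue densities. This is legitimate because $Q_t$ is positive definite, so each transition has a genuine density. The weak-constraint model is an instance of \cref{ass:ssm}, so I will start from the smoothing posterior formula \cref{eq:path-posterior}:
\[
p(\xtraj\mid\ytraj)=\frac{1}{p(\ytraj)}\,p(\xtraj)\prod_{t=0}^{T}p(y_t\mid x_t),
\]
with the path prior factorized as in \cref{eq:path-prior}.

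The second step identifies each factor as a Gaussian density and takes logarithms.
\begin{itemize}
\item The initial law gives $p(x_0)\propto\exp\bigl(-\tfrac12(x_0-x_b)\transpose B^{-1}(x_0-x_b)\bigr)$.
\item Since $\xi_t$ is independent of $X_{0:t}$, conditioning on $X_t=x_t$ shows that $X_{t+1}\mid X_t=x_t\sim\mathcal N(\mathcal M_t(x_t),Q_t)$. Hence $p(x_{t+1}\mid x_t)\propto\exp\bigl(-\tfrac12(x_{t+1}-\mathcal M_t(x_t))\transpose Q_t^{-1}(x_{t+1}-\mathcal M_t(x_t))\bigr)$.
\item Likewise $p(y_t\mid x_t)\propto\exp\bigl(-\tfrac12(y_t-\mathcal H_t(x_t))\transpose R_t^{-1}(y_t-\mathcal H_t(x_t))\bigr)$.
\end{itemize}
The normalizing constants $(2\pi)^{-d/2}\det(\cdot)^{-1/2}$ do not depend on $\xtraj$; this is the point to check. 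In particular the Gaussian transition constant depends only on $Q_t$, not on $x_t$. These constants, together with $p(\ytraj)^{-1}$, are absorbed into the proportionality constant. Multiplying the factors then gives exactly $\exp(-J_{\mathrm{wc}}(\xtraj))$ with $J_{\mathrm{wc}}$ as in \cref{eq:weak-4dvar-cost}.

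The conditional-independence structure required by \cref{ass:ssm} has to be confirmed from the mutual independence of $X_0$, $\{\xi_t\}$ and $\{\eta_t\}$:
\begin{itemize}
\item $X$ is Markov because $\xi_t$ is independent of the past states.
\item The $Y_t$ are conditionally independent given $\Xtraj$, and $Y_t$ depends on $\Xtraj$ only through $X_t$, because the $\eta_t$ are independent of each other and of the state process.
\end{itemize}
For the evidence to be positive and finite, $0<p(\ytraj)<\infty$: each likelihood factor is bounded above by its Gaussian normalizing constant, so the integrand is at most a constant times the prior density, which gives finiteness. The integrand is also strictly positive everywhere, which gives positivity. Thus we need no extra hypothesis on $\mathcal M_t$ or $\mathcal H_t$ beyond Borel measurability.

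Finally, the posterior density is a positive constant times $\exp(-J_{\mathrm{wc}})$, and $s\mapsto e^{-s}$ is strictly decreasing. Hence the maximizers of the posterior density and the minimizers of $J_{\mathrm{wc}}$ coincide, and any minimizer is a MAP trajectory. As in \cref{rem:map-conditions}, such a minimizer may be nonunique or may not exist without further regularity.

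I expect no real obstacle. The only delicate point is bookkeeping: verifying that all discarded constants are independent of $\xtraj$, and stating existence of the evidence so that \cref{eq:path-posterior} applies.
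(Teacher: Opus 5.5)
Your proposal is correct and follows the paper's proof: you factor the path prior and the likelihood into Gaussian densities, take negative logarithms, absorb the $\xtraj$-independent constants (including the evidence) so that $J_{\mathrm{wc}}$ appears up to an additive constant, and conclude by monotonicity of $e^{-s}$. Your extra checks of the Markov and conditional-independence structure, and of $0<p(\ytraj)<\infty$ via the uniform bound on the Gaussian likelihood factors, are details the paper leaves implicit.
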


\begin{proof}
The trajectory prior density with respect to Lebesgue measure on $\R^{n(T+1)}$ is
\[
p(\xtraj)
=
p(x_0)\prod_{t=0}^{T-1}p(x_{t+1}\mid x_t),
\]
where
\begin{align*}
p(x_0)
&\propto
\exp\!\left(
-\frac12 (x_0-x_b)\transpose B^{-1}(x_0-x_b)
\right),\\
p(x_{t+1}\mid x_t)
&\propto
\exp\!\left(
-\frac12
\parens{x_{t+1}-\mathcal M_t(x_t)}\transpose
Q_t^{-1}
\parens{x_{t+1}-\mathcal M_t(x_t)}
\right).
\end{align*}
The likelihood factorizes as
\[
p(\ytraj\mid \xtraj)
=
\prod_{t=0}^{T}p(y_t\mid x_t),
\]
with
\[
p(y_t\mid x_t)
\propto
\exp\!\left(
-\frac12
\parens{y_t-\mathcal H_t(x_t)}\transpose
R_t^{-1}
\parens{y_t-\mathcal H_t(x_t)}
\right).
\]
Multiplying prior and likelihood and taking minus logarithms yields
\[
-\log p(\xtraj\mid \ytraj)
=
J_{\mathrm{wc}}(\xtraj) + C(\ytraj),
\]
where $C(\ytraj)$ is independent of $\xtraj$. Hence the posterior density is proportional to
\[
\exp(-J_{\mathrm{wc}}(\xtraj)).
\]
Therefore any minimizer of $J_{\mathrm{wc}}$ is a MAP trajectory.
\end{proof}

The variational and adjoint formulation of data assimilation is classical in the meteorological literature \citep{ledimet1986,talagrandcourtier1987}, and the negative-log-posterior / MAP interpretation under Gaussian assumptions is explicit in the Bayesian data-assimilation literature \citep{lorenc1986,zupanski1997}. What matters for the present paper is not merely that 4D-Var is Bayesian in origin, but that it solves a different optimization problem from filtering over laws. The optimizer in \cref{thm:strong-4dvar,prop:weak-4dvar} is a state or trajectory; the optimizers in the next section will be full probability distributions.

\subsection{MAP, posterior mean, and posterior law}

Let $\pi$ denote any posterior density on $\R^d$. Then:
\begin{enumerate}[label=\arabic*.]
\item A \emph{MAP estimator} is any maximizer of $\pi(x)$.
\item The \emph{posterior mean} is $\int x\,\pi(x)\dd x$, when the integral exists.
\item The \emph{posterior law} is the full measure with density $\pi$.
\end{enumerate}
Only in special cases, such as symmetric unimodal Gaussian posteriors, do these three objects align in an obvious way. In nonlinear or non-Gaussian data assimilation they typically do not. The difference is essential for later claims about reinforcement learning: a reward that targets posterior means or trajectory modes is not automatically a reward that targets the posterior law.

%% file: 04_posterior_variational.tex
\section{Posterior-as-variational identities: one-step filtering and path-space forms}
\label{sec:variational-posterior}

We now state the central variational identities. They are direct consequences of Bayes' rule, but they deserve to be singled out because they put posterior inference, MAP estimation, Gaussian variational approximation, and KL-regularized control under the same umbrella. The key point for the sequel is that the identity itself is unconditional on the admissible family beyond the stated integrability assumptions, whereas the corresponding minimizer statement needs the target posterior law to belong to that family.

The exact infimum statements below all use the same admissible-truncation device.

\begin{lemma}[Admissible truncations recover the Gibbs infimum]
\label{lem:admissible-truncation}
Let $\mu$ be a probability measure on a measurable space, let $f\ge 0$ be measurable with
\[
0<Z:=\int f\,\dd\mu<\infty,
\]
and define
\[
\nu(\dd x):=\frac{f(x)\,\mu(\dd x)}{Z}.
\]
Let $g\ge 0$ be measurable and finite $\nu$-almost surely, and for each $n\in\mathbb N$ define
\[
A_n:=\set{x:\ e^{-n}\le f(x)\le e^n,\ g(x)\le n},
\qquad
\nu_n(\dd x):=\nu(\dd x\mid A_n).
\]
Then:
\begin{enumerate}[label=\roman*.]
\item $\nu(A_n)\to 1$ and hence $\nu_n$ is well defined for all sufficiently large $n$;
\item $\nu_n\ll\mu$;
\item $\E_{\nu_n}\!\bracks{\abs{\log f}}\le n$ and $\E_{\nu_n}[g]\le n$;
\item $\KL(\nu_n\|\mu)<\infty$;
\item $\KL(\nu_n\|\nu)=-\log \nu(A_n)\to 0$.
\end{enumerate}
Consequently, for any admissible class $\mc A$ that contains all sufficiently large $\nu_n$ and on which
\[
J(q):=\E_q[-\log f]+\KL(q\|\mu)
\]
is well defined, one has
\[
\inf_{q\in\mc A}J(q)=-\log Z.
\]
\end{lemma}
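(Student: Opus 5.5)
Items (i)--(v) are checked directly from the definitions, and then the infimum claim follows from a Gibbs variational identity for $J$ together with these truncations.

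For (i), $\nu$ is a probability measure with $\nu(\{f=0\})=0$ and $\nu(\{f=\infty\})=0$. The second holds because $\int f\,d\mu<\infty$ forces $f<\infty$ $\mu$-a.e. By hypothesis $g<\infty$ $\nu$-a.s. Hence $A_n\uparrow\{0<f<\infty,\ g<\infty\}$, a set of full $\nu$-measure, and continuity from below gives $\nu(A_n)\to1$. For (ii), $\nu\ll\mu$, so $\nu_n\ll\nu\ll\mu$. For (iii), on $A_n$ we have $\abs{\log f}\le n$ and $g\le n$, and $\nu_n$ is concentrated on $A_n$. For (v), $d\nu_n/d\nu=\ind_{A_n}/\nu(A_n)$, so $\KL(\nu_n\|\nu)=-\log\nu(A_n)$, which tends to $0$ by (i). For (iv), the chain rule gives
\[
\frac{d\nu_n}{d\mu}=\frac{\ind_{A_n}f}{Z\,\nu(A_n)},
\]
so $\KL(\nu_n\|\mu)=\E_{\nu_n}[\log f]-\log Z-\log\nu(A_n)$. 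This is finite by (iii).

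For the infimum, the key step is the identity
\[
J(q)=\KL(q\|\nu)-\log Z,
\]
valid for every $q$ on which $J$ is well defined. When $q\ll\mu$ we have $\log\frac{dq}{d\mu}=\log\frac{dq}{d\nu}+\log f-\log Z$ $q$-a.s. Integrating against $q$ yields the identity, provided the integrals can be split. If $q\not\ll\nu$, for instance when $q$ charges $\{f=0\}$, then $\E_q[-\log f]=+\infty$, and both sides equal $+\infty$.

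Justifying this splitting is the main obstacle. The expression $\E_q[-\log f]$ may be only quasi-integrable, so I would avoid adding $\pm\infty$. I would first take the positive parts: $-\log f$ has negative part $(\log f)^+$, and $\E_q[(\log f)^+]<\infty$ should follow from $\int f\,d\mu<\infty$ together with $\KL(q\|\mu)<\infty$, via the Donsker--Varadhan or Young inequality $ab\le e^a+b\log b-b$. With that bound, whenever $J(q)$ is well defined, each term is either finite or $+\infty$, and the identity holds in $(-\infty,\infty]$.

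Given the identity, $J(q)\ge-\log Z$ for all $q\in\mc A$, because KL is nonnegative. Conversely, $\nu_n\in\mc A$ for all large $n$, and
\[
J(\nu_n)=\KL(\nu_n\|\nu)-\log Z=-\log\nu(A_n)-\log Z\to-\log Z.
\]
This value can also be computed directly from (iii) and (iv), which gives a second check. Hence $\inf_{\mc A}J=-\log Z$. The role of $g$ is only to let $\mc A$ impose extra integrability, such as finite moments, which every $\nu_n$ satisfies by (iii).
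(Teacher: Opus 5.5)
Your proposal is correct and follows the paper's proof. Items (i)--(v) come from the explicit densities $d\nu_n/d\mu$ and $d\nu_n/d\nu$, and the infimum follows from the Gibbs identity $J(q)=\KL(q\|\nu)-\log Z$: nonnegativity of KL gives the lower bound, and $J(\nu_n)=-\log\nu(A_n)-\log Z$ gives sharpness. Your Young/Donsker--Varadhan bound on $\E_q[(\log f)^+]$, which justifies splitting the integrals so the identity holds in $(-\infty,\infty]$, is extra rigor that the paper leaves implicit when it simply invokes the Gibbs identity. Strictly, the pointwise log-density relation needs $q\ll\nu$ rather than $q\ll\mu$, but your separate treatment of $q\not\ll\nu$ covers this.
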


\begin{proof}
Because $\nu$ has density $f/Z$ with respect to $\mu$, it is supported on the set where $0<f<\infty$. Since $g<\infty$ $\nu$-almost surely, the sets $A_n$ increase to a $\nu$-full set. This proves $\nu(A_n)\to 1$.

By construction, $\nu_n\ll\nu\ll\mu$, so \textup{(ii)} holds. On $A_n$ one has $\abs{\log f}\le n$ and $g\le n$, so \textup{(iii)} follows immediately. Moreover,
\[
\frac{\dd \nu_n}{\dd \mu}(x)
=
\frac{f(x)\ind_{A_n}(x)}{Z\,\nu(A_n)},
\]
hence
\[
\log\frac{\dd \nu_n}{\dd \mu}
=
\log f-\log Z-\log \nu(A_n)
\qquad
\nu_n\text{-a.s.}
\]
The right-hand side is $\nu_n$-integrable because $\abs{\log f}\le n$ on $A_n$, so \textup{(iv)} follows. Likewise,
\[
\frac{\dd \nu_n}{\dd \nu}
=
\frac{\ind_{A_n}}{\nu(A_n)},
\]
which gives
\[
\KL(\nu_n\|\nu)
=
\int \log\frac{\dd \nu_n}{\dd \nu}\,\dd \nu_n
=
-\log \nu(A_n)\to 0.
\]
This proves \textup{(v)}.

For any admissible $q\in\mc A$, the Gibbs identity gives
\[
J(q)=\KL(q\|\nu)-\log Z\ge -\log Z.
\]
Applying that same identity to $\nu_n$ yields
\[
J(\nu_n)=\KL(\nu_n\|\nu)-\log Z=-\log \nu(A_n)-\log Z\to -\log Z,
\]
so the lower bound is sharp and therefore $\inf_{q\in\mc A}J(q)=-\log Z$.
\end{proof}

\subsection{One-step filtering identity}

Fix $t\in\set{0,\dots,T}$ and the realized observation $y_t$. Let $p_t^f$ be the forecast density and define the analysis density
\begin{equation}
p_t^a(x_t)
:=
\frac{p(y_t\mid x_t)p_t^f(x_t)}{p(y_t\mid y_{0:t-1})}.
\label{eq:analysis-density-bayes}
\end{equation}
Let $\mc Q_t$ be the collection of densities $q_t$ such that
\begin{enumerate}[label=\roman*.]
\item $q_t\ll p_t^f$,
\item $\E_{q_t}\!\bracks{\abs{\log p(y_t\mid X_t)}}<\infty$,
\item $\KL(q_t\|p_t^f)<\infty$.
\end{enumerate}
For $q_t\in\mc Q_t$, define
\begin{equation}
\mc J_t(q_t)
:=
\E_{q_t}\!\bracks{-\log p(y_t\mid X_t)}
\;+\;
\KL(q_t\|p_t^f).
\label{eq:one-step-functional}
\end{equation}

\begin{theorem}[One-step posterior-as-variational identity]
\label{thm:one-step-variational}
Under \cref{ass:ssm}, for every $q_t\in\mc Q_t$,
\begin{equation}
\mc J_t(q_t)
=
\KL(q_t\|p_t^a)-\log p(y_t\mid y_{0:t-1}).
\label{eq:one-step-identity}
\end{equation}
\begin{equation}
\inf_{q_t\in\mc Q_t}\mc J_t(q_t)
=
-\log p(y_t\mid y_{0:t-1}).
\label{eq:one-step-infimum}
\end{equation}
If, in addition, $p_t^a\in\mc Q_t$, then $\mc J_t$ has the unique minimizer
\begin{equation}
q_t^\star = p_t^a.
\end{equation}
\end{theorem}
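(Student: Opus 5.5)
The proof has three parts: the identity \cref{eq:one-step-identity} as a pointwise log-density computation, the infimum \cref{eq:one-step-infimum} from \cref{lem:admissible-truncation}, and uniqueness from the equality case of the KL divergence.

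\emph{The identity.} Fix $q_t\in\mc Q_t$ and write $Z_t=p(y_t\mid y_{0:t-1})$, which is positive and finite by \cref{ass:ssm}. By \cref{eq:analysis-density-bayes}, on the set where $q_t>0$ (which lies $q_t$-a.s.\ inside $\{p_t^f>0\}$ because $q_t\ll p_t^f$) we have
\[
\log\frac{q_t}{p_t^a}=\log\frac{q_t}{p_t^f}-\log p(y_t\mid x_t)+\log Z_t .
\]
I will integrate this against $q_t$. The first term on the right has $q_t$-integral $\KL(q_t\|p_t^f)<\infty$ by condition (iii). The second is $q_t$-integrable by condition (ii). The third is a constant.

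The point needing care is that the left side is integrable. It is the sum of integrable functions on the right, so this holds. It also gives $q_t\ll p_t^a$ up to the null set $\{p(y_t\mid x_t)=0\}$, which carries no $q_t$-mass because $\E_{q_t}|\log p(y_t\mid X_t)|<\infty$. Linearity of the integral then yields $\KL(q_t\|p_t^a)=\mc J_t(q_t)+\log Z_t$, which is \cref{eq:one-step-identity}.

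\emph{The infimum.} Since $\KL\ge 0$, the identity gives $\mc J_t\ge-\log Z_t$ on $\mc Q_t$. For sharpness I apply \cref{lem:admissible-truncation} with
\[
\mu=p_t^f,\qquad f(x)=p(y_t\mid x),\qquad g\equiv 0,
\]
so that $\nu=p_t^a$. I then check that the truncations $\nu_n$ lie in $\mc Q_t$:
\begin{itemize}
\item property (ii) of the lemma gives $\nu_n\ll p_t^f$;
\item property (iii) gives $\E_{\nu_n}|\log f|\le n$;
\item property (iv) gives finite KL.
\end{itemize}
The functional $J$ in the lemma is exactly $\mc J_t$. The lemma's conclusion therefore gives $\inf_{\mc Q_t}\mc J_t=-\log Z_t$. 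Alternatively, one can use $\mc J_t(\nu_n)=-\log\nu(A_n)-\log Z_t\to-\log Z_t$ directly.

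\emph{Uniqueness.} If $p_t^a\in\mc Q_t$, then $\mc J_t(p_t^a)=-\log Z_t$, so $p_t^a$ attains the infimum. Conversely, any minimizer $q_t$ satisfies $\KL(q_t\|p_t^a)=0$. By Gibbs' inequality, with equality iff the two measures coincide, $q_t=p_t^a$ almost everywhere, that is, as a density up to Lebesgue-null sets.

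The main obstacle is the measure-theoretic bookkeeping in the first step. One must justify splitting $\int q_t\log(q_t/p_t^a)$ into three integrable pieces and not create an $\infty-\infty$ expression. Conditions (ii) and (iii) of $\mc Q_t$ are exactly what make this split legitimate. Everything else follows from the earlier lemma and standard properties of KL.
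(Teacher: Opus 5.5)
Your proposal is correct and follows the paper's proof essentially step for step. Both arguments integrate the pointwise log-density identity against $q_t$, obtain sharpness of the bound from \cref{lem:admissible-truncation} with $\mu=p_t^f$ and $f=p(y_t\mid\cdot)$, and get uniqueness from the equality case of KL. The only difference is cosmetic: you take $g\equiv 0$ where the paper takes $g=\abs{\log p(y_t\mid x_t)}$, and since the paper's constraint $g\le n$ is already implied by $e^{-n}\le f\le e^n$, both choices give the same truncation sets $A_n$.
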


\begin{proof}
Because $0<Z_t<\infty$ by \cref{ass:ssm} and $\abs{\log p(y_t\mid X_t)}$ is $q_t$-integrable, one has $p(y_t\mid X_t)\in(0,\infty)$ for $q_t$-almost every $X_t$. Since $q_t\ll p_t^f$, it follows from \cref{eq:analysis-density-bayes} that $q_t\ll p_t^a$, so every logarithm below is well defined $q_t$-almost surely.
By \cref{eq:analysis-density-bayes},
\[
\log p_t^a(x_t)
=
\log p(y_t\mid x_t) + \log p_t^f(x_t) - \log p(y_t\mid y_{0:t-1}).
\]
Rearranging,
\[
-\log p(y_t\mid x_t) + \log \frac{q_t(x_t)}{p_t^f(x_t)}
=
\log \frac{q_t(x_t)}{p_t^a(x_t)} - \log p(y_t\mid y_{0:t-1}).
\]
Integrating with respect to $q_t$ gives
\[
\E_{q_t}\!\bracks{-\log p(y_t\mid X_t)}
\;+\;
\E_{q_t}\!\left[\log \frac{q_t(X_t)}{p_t^f(X_t)}\right]
=
\E_{q_t}\!\left[\log \frac{q_t(X_t)}{p_t^a(X_t)}\right]
-\log p(y_t\mid y_{0:t-1}),
\]
which is precisely \cref{eq:one-step-identity}. Nonnegativity of KL divergence yields
\[
\mc J_t(q_t)\ge -\log p(y_t\mid y_{0:t-1}),
\]
so it remains to prove that this lower bound is sharp. Apply \cref{lem:admissible-truncation} with
\[
\mu(\dd x_t)=p_t^f(x_t)\dd x_t,
\qquad
f(x_t)=p(y_t\mid x_t),
\qquad
g(x_t)=\abs{\log p(y_t\mid x_t)}.
\]
Then $\nu(\dd x_t)=p_t^a(x_t)\dd x_t$, and the truncated posteriors $\nu_n$ from the lemma belong to $\mc Q_t$. Using \cref{eq:one-step-identity},
\[
\mc J_t(\nu_n)
=
\KL(\nu_n\|p_t^a)-\log p(y_t\mid y_{0:t-1})
=
-\log \nu(A_n)-\log p(y_t\mid y_{0:t-1}),
\]
which converges to $-\log p(y_t\mid y_{0:t-1})$. This proves \cref{eq:one-step-infimum}. If $p_t^a\in\mc Q_t$, then evaluating \cref{eq:one-step-identity} at $q_t=p_t^a$ shows that the lower bound is attained there. Uniqueness follows because $\KL(q_t\|p_t^a)=0$ if and only if $q_t=p_t^a$ almost everywhere.
\end{proof}

\begin{remark}[Posterior admissibility is an extra hypothesis]
\label{rem:one-step-admissibility}
The normalization condition $0<Z_t<\infty$ ensures that $p_t^a$ is a well-defined posterior density, but it does not by itself imply $p_t^a\in\mc Q_t$. The additional membership requires the posterior expectation of $\abs{\log p(y_t\mid X_t)}$ and the relative entropy $\KL(p_t^a\|p_t^f)$ to be finite. We therefore separate the exact identity from the minimizer statement.
\end{remark}

\begin{corollary}[Restricted variational families]
\label{cor:one-step-restricted}
Let $\mc F_t\subset\mc Q_t$ be any nonempty admissible family. Then
\begin{equation}
\argmin_{q_t\in \mc F_t} \mc J_t(q_t)
=
\argmin_{q_t\in \mc F_t} \KL(q_t\|p_t^a).
\end{equation}
Thus the two restricted minimization problems are equivalent. In particular, any minimizer of \cref{eq:one-step-functional} over $\mc F_t$ is a reverse-KL projection of the analysis posterior onto that family.
\end{corollary}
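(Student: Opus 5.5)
The plan is to read \cref{cor:one-step-restricted} directly off the identity \cref{eq:one-step-identity} in \cref{thm:one-step-variational}. The point is that, on the restricted family, $\mc J_t$ and $\KL(\cdot\|p_t^a)$ differ only by a constant that does not depend on $q_t$.

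\emph{Step 1: the identity holds on $\mc F_t$.} Since $\mc F_t\subset\mc Q_t$, \cref{thm:one-step-variational} applies to every $q_t\in\mc F_t$, giving
\[
\mc J_t(q_t)=\KL(q_t\|p_t^a)-\log Z_t .
\]
I will record two facts about the terms here. First, $\mc J_t(q_t)$ is a finite real number, by conditions (ii) and (iii) in the definition of $\mc Q_t$. Second, $-\log Z_t$ is a finite constant independent of $q_t$, because $0<Z_t<\infty$ by \cref{ass:ssm}. It follows that $\KL(q_t\|p_t^a)$ is also finite for every $q_t\in\mc F_t$, so no $\infty-\infty$ ambiguity can arise.

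\emph{Step 2: transfer the minimizers.} Take any $q,q'\in\mc F_t$. By Step 1,
\[
\mc J_t(q)\le \mc J_t(q') \iff \KL(q\|p_t^a)\le \KL(q'\|p_t^a).
\]
So $q$ minimizes $\mc J_t$ over $\mc F_t$ exactly when it minimizes $\KL(\cdot\|p_t^a)$ over $\mc F_t$. This gives equality of the two argmin sets, including the case where both are empty. It also gives $\inf_{\mc F_t}\mc J_t=\inf_{\mc F_t}\KL(\cdot\|p_t^a)-\log Z_t$.

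\emph{Step 3: interpretation.} A minimizer of $\KL(q_t\|p_t^a)$ over $\mc F_t$ is, by definition, a reverse-KL (information) projection of $p_t^a$ onto $\mc F_t$. This is the final sentence of the corollary.

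There is no real obstacle. The only point needing care is the finiteness bookkeeping in Step 1, which is what justifies subtracting the constant. The argument does not claim that minimizers exist or are unique, since neither holds in general for a restricted family.
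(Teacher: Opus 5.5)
Your proposal is correct and follows the paper's proof, which simply observes that the additive constant $-\log p(y_t\mid y_{0:t-1})$ in the one-step identity does not depend on $q_t$. Your finiteness check, showing that $\KL(q_t\|p_t^a)$ is finite on $\mathcal F_t$ so no $\infty-\infty$ issue arises, is a sound refinement that the paper leaves implicit.
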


\begin{proof}
The additive constant $-\log p(y_t\mid y_{0:t-1})$ in \cref{eq:one-step-identity} does not depend on $q_t$.
\end{proof}

\subsection{Path-space identity}

Let $\mc Q_{\mathrm{path}}$ be the set of densities $q$ on $\R^{n(T+1)}$ such that
\begin{enumerate}[label=\roman*.]
\item $q\ll p(\xtraj)$,
\item $\E_q\!\bracks{\sum_{t=0}^{T}\abs{\log p(y_t\mid X_t)}}<\infty$,
\item $\KL(q\|p(\xtraj))<\infty$.
\end{enumerate}
For $q\in\mc Q_{\mathrm{path}}$, define
\begin{equation}
\mc J_{\mathrm{path}}(q)
:=
\E_q\!\bracks{-\sum_{t=0}^{T}\log p(y_t\mid X_t)}
\;+\;
\KL(q\|p(\xtraj)).
\label{eq:path-functional}
\end{equation}

\begin{theorem}[Path-space posterior-as-variational identity]
\label{thm:path-variational}
Under \cref{ass:ssm}, for every $q\in\mc Q_{\mathrm{path}}$,
\begin{equation}
\mc J_{\mathrm{path}}(q)
=
\KL(q\|p(\xtraj\mid \ytraj))-\log p(\ytraj).
\label{eq:path-identity}
\end{equation}
\begin{equation}
\inf_{q\in\mc Q_{\mathrm{path}}}\mc J_{\mathrm{path}}(q)
=
-\log p(\ytraj).
\label{eq:path-infimum}
\end{equation}
If, in addition, $p(\xtraj\mid \ytraj)\in\mc Q_{\mathrm{path}}$, then $\mc J_{\mathrm{path}}$ has the unique minimizer
\begin{equation}
q^\star(\xtraj) = p(\xtraj\mid \ytraj).
\end{equation}
\end{theorem}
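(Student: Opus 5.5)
The plan is to mirror the proof of \cref{thm:one-step-variational}, replacing the forecast density by the path prior $p(\xtraj)$ from \cref{eq:path-prior} and the single likelihood by the product likelihood $L(\xtraj):=\prod_{t=0}^{T}p(y_t\mid x_t)$. The argument has three steps: establish the pointwise identity, deduce the infimum by truncation, and then handle the minimizer.

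\textbf{Step 1: well-definedness and the pointwise identity.} Fix $q\in\mc Q_{\mathrm{path}}$. Condition (ii) of $\mc Q_{\mathrm{path}}$ makes each $\abs{\log p(y_t\mid X_t)}$ $q$-integrable. Hence $L(\xtraj)\in(0,\infty)$ for $q$-a.e.\ $\xtraj$. Combined with $q\ll p(\xtraj)$ and \cref{eq:path-posterior}, this gives $q\ll p(\xtraj\mid\ytraj)$, so all logarithms below are finite $q$-a.s. Taking logs in \cref{eq:path-posterior} gives
\[
\log p(\xtraj\mid\ytraj)=\log p(\xtraj)+\sum_{t=0}^{T}\log p(y_t\mid x_t)-\log p(\ytraj).
\]
Rearranging yields
\[
-\sum_{t=0}^{T}\log p(y_t\mid x_t)+\log\frac{q(\xtraj)}{p(\xtraj)}=\log\frac{q(\xtraj)}{p(\xtraj\mid\ytraj)}-\log p(\ytraj).
\]
The left side is $q$-integrable by (ii) and (iii). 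Therefore the right side is integrable, and integrating against $q$ gives \cref{eq:path-identity}. I will note explicitly that $\KL(q\|p(\xtraj\mid\ytraj))$ is finite here, since it equals a difference of integrable terms; this is the only integrability subtlety.

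\textbf{Step 2: the infimum.} Nonnegativity of KL gives the lower bound $-\log p(\ytraj)$. For sharpness I will invoke \cref{lem:admissible-truncation} with
\[
\mu(\dd\xtraj)=p(\xtraj)\dd\xtraj,\qquad f=L,\qquad g(\xtraj)=\sum_{t=0}^{T}\abs{\log p(y_t\mid x_t)}.
\]
Then $Z=Z_{0:T}\in(0,\infty)$ by \cref{ass:ssm}, and $\nu$ is the smoothing posterior.

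The main point to verify is that $g$ is finite $\nu$-a.s. This holds because $\nu$ is concentrated where $L\in(0,\infty)$. I must make sure each factor $p(y_t\mid x_t)$ is individually in $(0,\infty)$ there, not just the product. I plan to argue that a positive finite product of nonnegative numbers, each taking values in $[0,\infty)$, has every factor positive and finite. Densities are finite-valued, so I can choose such versions.

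Given that, the lemma shows the truncations $\nu_n$ are absolutely continuous with respect to the prior, satisfy $\E_{\nu_n}[g]\le n$, and have finite KL to the prior. So $\nu_n\in\mc Q_{\mathrm{path}}$ for large $n$. By Step 1,
\[
\mc J_{\mathrm{path}}(\nu_n)=-\log\nu(A_n)-\log p(\ytraj)\to-\log p(\ytraj),
\]
which proves \cref{eq:path-infimum}.

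\textbf{Step 3: the minimizer.} If $p(\xtraj\mid\ytraj)\in\mc Q_{\mathrm{path}}$, evaluating \cref{eq:path-identity} at it gives $\KL=0$, so the infimum is attained. Uniqueness follows because $\KL(q\|p(\xtraj\mid\ytraj))=0$ forces $q=p(\xtraj\mid\ytraj)$ almost everywhere.

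I expect the only real obstacle to be the bookkeeping in Step 2: choosing $g$ as the sum of absolute log-likelihoods and confirming that it is $\nu$-a.s.\ finite. The rest is a verbatim transcription of the one-step proof.
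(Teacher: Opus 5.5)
Your proposal is correct and follows the paper's proof essentially step for step: the same pointwise log-identity integrated against $q$, the same application of \cref{lem:admissible-truncation} with $\mu$ the path prior, $f=\prod_{t=0}^{T}p(y_t\mid x_t)$ and $g=\sum_{t=0}^{T}\abs{\log p(y_t\mid x_t)}$, and the same KL-equals-zero uniqueness argument. You also make explicit two points the paper leaves implicit: that finite $\KL(q\|p(\xtraj))$ makes $\log(q/p(\xtraj))$ $q$-integrable, so the KL to the posterior is finite, and that $g$ is finite $\nu$-almost surely.
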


\begin{proof}
Because $0<Z_{0:T}<\infty$ by \cref{ass:ssm} and $\sum_{t=0}^{T}\abs{\log p(y_t\mid X_t)}$ is $q$-integrable, one has $p(y_t\mid X_t)\in(0,\infty)$ for every $t$, $q$-almost surely. Hence $q\ll p(\xtraj\mid \ytraj)$ by \cref{eq:path-posterior}.
By \cref{eq:path-posterior},
\[
\log p(\xtraj\mid \ytraj)
=
\log p(\xtraj) + \sum_{t=0}^{T}\log p(y_t\mid x_t) - \log p(\ytraj).
\]
Therefore
\[
-\sum_{t=0}^{T}\log p(y_t\mid x_t) + \log\frac{q(\xtraj)}{p(\xtraj)}
=
\log\frac{q(\xtraj)}{p(\xtraj\mid \ytraj)} - \log p(\ytraj).
\]
Integrating with respect to $q$ yields \cref{eq:path-identity}, and nonnegativity of KL divergence gives the lower bound
\[
\mc J_{\mathrm{path}}(q)\ge -\log p(\ytraj).
\]
To show that this bound is sharp, apply \cref{lem:admissible-truncation} with
\[
\mu(\dd \xtraj)=p(\xtraj)\dd \xtraj,
\qquad
f(\xtraj)=\prod_{t=0}^{T}p(y_t\mid x_t),
\qquad
g(\xtraj)=\sum_{t=0}^{T}\abs{\log p(y_t\mid x_t)}.
\]
Then $\nu(\dd \xtraj)=p(\xtraj\mid \ytraj)\dd \xtraj$, and the truncated posteriors $\nu_n$ from the lemma belong to $\mc Q_{\mathrm{path}}$. Using \cref{eq:path-identity},
\[
\mc J_{\mathrm{path}}(\nu_n)
=
\KL(\nu_n\|p(\xtraj\mid \ytraj))-\log p(\ytraj)
=
-\log \nu(A_n)-\log p(\ytraj)\to -\log p(\ytraj).
\]
This proves \cref{eq:path-infimum}. If $p(\xtraj\mid \ytraj)\in\mc Q_{\mathrm{path}}$, then evaluating \cref{eq:path-identity} at that posterior shows that the lower bound is attained there. Since $\KL(q\|p(\xtraj\mid \ytraj))=0$ if and only if $q=p(\xtraj\mid \ytraj)$ almost everywhere, the minimizer is unique whenever it exists inside the admissible family.
\end{proof}

\begin{remark}[Path-space posterior admissibility]
\label{rem:path-admissibility}
The same caveat applies on path space: the evidence condition $0<Z_{0:T}<\infty$ makes the smoothing posterior well defined, but the minimizer statement additionally needs $p(\xtraj\mid \ytraj)\in\mc Q_{\mathrm{path}}$.
\end{remark}

\begin{corollary}[Unrestricted, Gaussian, and other restricted families]
\label{cor:path-restricted}
Let $\mc F\subset\mc Q_{\mathrm{path}}$ be any nonempty admissible family. Then
\begin{equation}
\argmin_{q\in\mc F}\mc J_{\mathrm{path}}(q)
=
\argmin_{q\in\mc F}\KL(q\|p(\xtraj\mid \ytraj)).
\end{equation}
In particular:
\begin{enumerate}[label=\arabic*.]
\item if $\mc F=\mc Q_{\mathrm{path}}$ and $p(\xtraj\mid \ytraj)\in\mc Q_{\mathrm{path}}$, then the unique minimizer is the full smoothing posterior;
\item if $\mc F$ is a Gaussian family, then any minimizer is a Gaussian variational approximation, equivalently a reverse-KL projection, of the smoothing posterior.
\end{enumerate}
\end{corollary}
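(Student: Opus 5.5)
The corollary follows from the identity \cref{eq:path-identity} of \cref{thm:path-variational}, which holds for every $q\in\mc Q_{\mathrm{path}}$ and hence for every $q\in\mc F$. On $\mc F$ one therefore has
\[
\mc J_{\mathrm{path}}(q)=\KL(q\|p(\xtraj\mid \ytraj))+c,\qquad c:=-\log p(\ytraj),
\]
and $c$ is a finite constant independent of $q$ by \cref{ass:ssm}. Adding a finite constant to a functional does not change its set of minimizers over any fixed domain. This holds in the set-valued sense: a point $q^\star\in\mc F$ satisfies $\mc J_{\mathrm{path}}(q^\star)\le \mc J_{\mathrm{path}}(q)$ for all $q\in\mc F$ if and only if $\KL(q^\star\|p(\xtraj\mid\ytraj))\le \KL(q\|p(\xtraj\mid\ytraj))$ for all $q\in\mc F$. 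This gives the equality of argmin sets, including the case where both are empty. Both functionals are finite on $\mc F$: $\mc J_{\mathrm{path}}$ by the integrability conditions (ii)--(iii) defining $\mc Q_{\mathrm{path}}$, and the KL term by the identity itself. So no $\infty-\infty$ issue arises. This mirrors the proof of \cref{cor:one-step-restricted}.

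For item 1, take $\mc F=\mc Q_{\mathrm{path}}$ and assume $p(\xtraj\mid\ytraj)\in\mc Q_{\mathrm{path}}$. Then the last part of \cref{thm:path-variational} applies directly. The KL term vanishes at the posterior and is strictly positive at every other $q$ (equality only almost everywhere), so the argmin set is exactly the singleton consisting of the smoothing posterior.

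For item 2, let $\mc F$ be a Gaussian family contained in $\mc Q_{\mathrm{path}}$. The set equality above shows that any minimizer of $\mc J_{\mathrm{path}}$ over $\mc F$ minimizes $\KL(\cdot\|p(\xtraj\mid\ytraj))$ over $\mc F$. This is by definition a reverse-KL (information) projection onto $\mc F$, that is, a Gaussian variational approximation.

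The only step needing care is the hypothesis $\mc F\subset\mc Q_{\mathrm{path}}$. A Gaussian $q$ must satisfy $q\ll p(\xtraj)$, have finite KL to the prior, and have an integrable log-likelihood. These conditions are part of the stated assumption rather than something to prove, so I would simply note that they are what licenses applying \cref{eq:path-identity}. No existence claim is made, so nothing further is needed.
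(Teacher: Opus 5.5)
Your argument is correct and is essentially the paper's. The corollary follows at once from \cref{eq:path-identity}: on $\mc Q_{\mathrm{path}}$, $\mc J_{\mathrm{path}}$ and $\KL(\cdot\|p(\xtraj\mid\ytraj))$ differ by the $q$-independent finite constant $-\log p(\ytraj)$, which mirrors the one-line proof of \cref{cor:one-step-restricted}; item 1 comes from the uniqueness clause of \cref{thm:path-variational}, and item 2 is the definition of a reverse-KL projection, with your remarks on finiteness over $\mc F$ and possibly empty argmin sets being a harmless refinement of the same route.
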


\begin{remark}[MAP and the ``Dirac restriction'' heuristic]
\label{rem:dirac-map}
In a discrete state space, Dirac laws that are supported on prior atoms are admissible, and minimizing \cref{eq:path-functional} over Dirac masses literally selects a path maximizing the posterior mass. In continuous state spaces, however, a Dirac law is singular with respect to a smooth prior density, so the KL term in \cref{eq:path-functional} is not finite. The mathematically correct continuous-state statement is that MAP is recovered by minimizing the negative log-posterior density directly, as in \cref{sec:4dvar}, or as the zero-variance limit of concentrated admissible families. Thus the phrase ``restricting $q$ to Dirac masses gives MAP'' should be read as a shorthand for a limiting point-mass approximation, not as a literal application of \cref{thm:path-variational} with singular measures.
\end{remark}

\begin{remark}[Posterior mean versus variational optimum]
\label{rem:posterior-mean}
The functionals \cref{eq:one-step-functional,eq:path-functional} optimize over \emph{laws}. Their optimizer is the posterior law, not a point estimator. Posterior means enter only after one chooses a decision rule and a loss function, such as quadratic loss. This distinction will matter again in \cref{sec:linear-gaussian,sec:hammoud}.
\end{remark}

The rest of the paper keeps four optimization domains separate. First, \cref{thm:one-step-variational,thm:path-variational} concern unrestricted optimization over admissible laws. Second, restricting the admissible family produces variational approximations such as Gaussian projections. Third, Section~\ref{sec:rl-control} restricts further to \emph{reachable} laws induced by a policy class under fixed passive dynamics, which adds a nontrivial representability constraint. Fourth, Section~\ref{sec:enkf} discusses empirical ensemble approximations, which are random particle or moment surrogates rather than exact optimizers over a law family. Keeping these four levels distinct is what prevents posterior laws, MAP points, Gaussian surrogates, reachable policy laws, and finite ensembles from being conflated.

%% file: 05_rl_control.tex
\section{KL-regularized control and control-as-inference}
\label{sec:rl-control}

The variational identities of \cref{sec:variational-posterior} can be read as control problems. This section separates two levels that are often merged in the literature: unrestricted optimization over path laws relative to passive dynamics, and optimization over the smaller reachable family induced by a policy class with fixed transition kernels. The distinction is precisely where posterior recovery can fail.

\subsection{Posterior as an optimal controlled law}

Let $p_0(\xtraj)$ be the path density induced by passive dynamics on trajectories, and let $\ell_t:\R^n\to\R$ be measurable stage costs. Assume the normalizing constant
\begin{equation}
Z_\ell := \int p_0(\xtraj)\exp\!\left(-\sum_{t=0}^{T}\ell_t(x_t)\right)\dd \xtraj
\end{equation}
is finite and strictly positive. Let $\mc Q_\ell$ be the set of densities $q$ such that
\begin{enumerate}[label=\roman*.]
\item $q\ll p_0$,
\item $\E_q\!\bracks{\sum_{t=0}^{T}\abs{\ell_t(X_t)}}<\infty$,
\item $\KL(q\|p_0)<\infty$.
\end{enumerate}
For any $q\in\mc Q_\ell$, define
\begin{equation}
\mc J_{\ell}(q)
:=
\E_q\!\bracks{\sum_{t=0}^{T}\ell_t(X_t)}
\;+\;
\KL(q\|p_0).
\label{eq:gibbs-functional}
\end{equation}

\begin{theorem}[Gibbs variational identity]
\label{thm:gibbs-control}
For every $q\in\mc Q_\ell$,
\begin{equation}
\mc J_{\ell}(q)
=
\KL(q\|q^\star)-\log Z_\ell,
\label{eq:gibbs-identity}
\end{equation}
where
\begin{equation}
q^\star(\xtraj)
:=
\frac{1}{Z_\ell}
p_0(\xtraj)\exp\!\left(-\sum_{t=0}^{T}\ell_t(x_t)\right).
\label{eq:gibbs-law}
\end{equation}
\begin{equation}
\inf_{q\in\mc Q_\ell}\mc J_{\ell}(q)
=
-\log Z_\ell.
\label{eq:gibbs-infimum}
\end{equation}
If, in addition, $q^\star\in\mc Q_\ell$, then $q^\star$ is the unique minimizer of \cref{eq:gibbs-functional}.
\end{theorem}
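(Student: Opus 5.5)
The proof follows the template of \cref{thm:path-variational}, with the likelihood product $\prod_t p(y_t\mid x_t)$ replaced by the Gibbs weight $f(\xtraj):=\exp\!\left(-\sum_{t=0}^{T}\ell_t(x_t)\right)$ and the prior path law replaced by the passive law $p_0$. Since each $\ell_t$ is real-valued, $f\in(0,\infty)$ everywhere, and $Z_\ell=\int f\,p_0\dd\xtraj\in(0,\infty)$ by assumption. Hence $q^\star$ in \cref{eq:gibbs-law} is a well-defined probability density, and $q^\star$ and $p_0$ are mutually absolutely continuous. In particular, every $q\in\mc Q_\ell$ satisfies $q\ll q^\star$, so all log-ratios below are defined $q$-almost surely. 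This is simpler than the likelihood case, where one had to argue that $p(y_t\mid X_t)>0$ holds $q$-a.s.

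For the identity \cref{eq:gibbs-identity}, I take logarithms in \cref{eq:gibbs-law}:
\[
\log q^\star(\xtraj)=\log p_0(\xtraj)-\sum_{t=0}^{T}\ell_t(x_t)-\log Z_\ell .
\]
Rearranging gives, $q$-a.s.,
\[
\sum_{t=0}^{T}\ell_t(x_t)+\log\frac{q(\xtraj)}{p_0(\xtraj)}=\log\frac{q(\xtraj)}{q^\star(\xtraj)}-\log Z_\ell .
\]
I then integrate against $q$. The first term on the left is integrable by condition (ii) of $\mc Q_\ell$, and the second term has finite integral by condition (iii). Therefore the right-hand side is $q$-integrable, its integral is $\KL(q\|q^\star)$ (finite), and \cref{eq:gibbs-identity} follows. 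The only point needing care is splitting the expectation of a sum into a sum of expectations. This is justified because $\log(q/p_0)$ has a well-defined integral: its negative part is always $q$-integrable by the standard bound $x\log x\ge -e^{-1}$ applied to $\dd q/\dd p_0$.

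For \cref{eq:gibbs-infimum}, nonnegativity of KL gives the lower bound $\mc J_\ell\ge-\log Z_\ell$. For sharpness I apply \cref{lem:admissible-truncation} with $\mu(\dd\xtraj)=p_0(\xtraj)\dd\xtraj$, $f$ as above, and $g(\xtraj)=\sum_t\abs{\ell_t(x_t)}$, which is finite everywhere. Then $\nu=q^\star\dd\xtraj$, and by items (ii)–(iv) of the lemma the truncations $\nu_n$ are densities with $\nu_n\ll p_0$, $\E_{\nu_n}[g]\le n$ and $\KL(\nu_n\|p_0)<\infty$, so $\nu_n\in\mc Q_\ell$ for large $n$. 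By item (v), $\mc J_\ell(\nu_n)=-\log\nu(A_n)-\log Z_\ell\to-\log Z_\ell$. Alternatively, one can simply invoke the "consequently" clause of the lemma, noting that here $-\log f=\sum_t\ell_t$, so the lemma's functional $J$ coincides with $\mc J_\ell$.

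Finally, if $q^\star\in\mc Q_\ell$, evaluating \cref{eq:gibbs-identity} at $q^\star$ gives $\mc J_\ell(q^\star)=-\log Z_\ell$, so the infimum is attained. Uniqueness follows because $\KL(q\|q^\star)=0$ forces $q=q^\star$ almost everywhere. I expect no real obstacle. The only delicate step is the integrability bookkeeping in the second paragraph, which conditions (ii)–(iii) of $\mc Q_\ell$ are designed to handle.
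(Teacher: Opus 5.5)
Your proposal is correct and follows the paper's proof. Both derive the identity by integrating the pointwise log-identity for $q^\star$ against $q$, and both get the lower bound from nonnegativity of KL. Sharpness comes from \cref{lem:admissible-truncation} with $\mu=p_0$, $f=\exp\!\left(-\sum_t\ell_t\right)$ and $g=\sum_t\abs{\ell_t}$, and uniqueness from $\KL(q\|q^\star)=0$ forcing $q=q^\star$. Your remark that the negative part of $\log(\dd q/\dd p_0)$ is always $q$-integrable only makes explicit the splitting of the expectation that the paper does silently.
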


\begin{proof}
Because each $\ell_t$ is real valued, the density $q^\star$ from \cref{eq:gibbs-law} is strictly positive wherever $p_0$ is strictly positive. Hence every $q\in\mc Q_\ell$ is absolutely continuous with respect to $q^\star$ as well.
The density $q^\star$ defined in \cref{eq:gibbs-law} satisfies
\[
\log q^\star(\xtraj)
=
\log p_0(\xtraj) - \sum_{t=0}^{T}\ell_t(x_t)-\log Z_\ell.
\]
Rearranging and integrating against $q$ yields \cref{eq:gibbs-identity}, and nonnegativity of KL divergence gives the lower bound
\[
\mc J_\ell(q)\ge -\log Z_\ell.
\]
To show that this bound is sharp while preserving the admissibility conditions in $\mc Q_\ell$, apply \cref{lem:admissible-truncation} with
\[
\mu(\dd \xtraj)=p_0(\xtraj)\dd \xtraj,
\qquad
f(\xtraj)=\exp\!\left(-\sum_{t=0}^{T}\ell_t(x_t)\right),
\qquad
g(\xtraj)=\sum_{t=0}^{T}\abs{\ell_t(x_t)}.
\]
Then $\nu(\dd \xtraj)=q^\star(\xtraj)\dd \xtraj$, and the truncated Gibbs laws $\nu_n$ from the lemma belong to $\mc Q_\ell$. Using \cref{eq:gibbs-identity},
\[
\mc J_\ell(\nu_n)
=
\KL(\nu_n\|q^\star)-\log Z_\ell
=
-\log \nu(A_n)-\log Z_\ell\to -\log Z_\ell.
\]
This proves \cref{eq:gibbs-infimum}. If $q^\star\in\mc Q_\ell$, then evaluating \cref{eq:gibbs-identity} at $q=q^\star$ shows that the lower bound is attained there, and uniqueness follows from strict positivity of KL divergence away from the reference law.
\end{proof}

\begin{remark}[Gibbs admissibility is not automatic]
\label{rem:gibbs-admissibility}
The condition $0<Z_\ell<\infty$ guarantees that \cref{eq:gibbs-law} defines a probability density, but it does not by itself imply $q^\star\in\mc Q_\ell$. The minimizer statement therefore needs the additional integrability and finite-KL membership of the Gibbs law itself.
\end{remark}

\begin{corollary}[Bayesian posterior as the Gibbs law]
\label{cor:posterior-gibbs}
If $p_0(\xtraj)=p(\xtraj)$ is the prior path density from \cref{eq:path-prior} and
\begin{equation}
\ell_t(x_t) = -\log p(y_t\mid x_t),
\end{equation}
then \cref{eq:gibbs-law} becomes
\begin{equation}
q^\star(\xtraj) = p(\xtraj\mid \ytraj).
\end{equation}
Consequently,
\[
\inf_{q\in\mc Q_\ell}\mc J_\ell(q) = -\log p(\ytraj),
\]
and if $p(\xtraj\mid \ytraj)\in\mc Q_\ell$, then Bayesian smoothing is exactly the unique minimizer of \cref{eq:gibbs-functional}.
\end{corollary}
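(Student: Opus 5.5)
The plan is to specialize \cref{thm:gibbs-control} by direct substitution and then identify the resulting objects with those of \cref{thm:path-variational}.

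First, I insert $p_0=p(\xtraj)$ and $\ell_t=-\log p(y_t\mid x_t)$ into the definition of $Z_\ell$. This gives
\[
\exp\!\left(-\sum_{t=0}^{T}\ell_t(x_t)\right)=\prod_{t=0}^{T}p(y_t\mid x_t),
\]
so $Z_\ell=\int p(\xtraj)\prod_t p(y_t\mid x_t)\dd\xtraj=Z_{0:T}=p(\ytraj)$. By \cref{ass:ssm} this is finite and strictly positive, which is the normalization hypothesis that \cref{thm:gibbs-control} needs. Substituting the same expression into \cref{eq:gibbs-law} and comparing with \cref{eq:path-posterior} then identifies $q^\star$ with $p(\xtraj\mid\ytraj)$ as densities.

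Second, I check that the admissible classes coincide. Since $\abs{\ell_t}=\abs{\log p(y_t\mid x_t)}$, the three conditions defining $\mc Q_\ell$ are literally the three conditions defining $\mc Q_{\mathrm{path}}$. Hence $\mc Q_\ell=\mc Q_{\mathrm{path}}$ and $\mc J_\ell=\mc J_{\mathrm{path}}$.

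The infimum statement then follows from \cref{eq:gibbs-infimum}, with $-\log Z_\ell=-\log p(\ytraj)$. Under the extra hypothesis $p(\xtraj\mid\ytraj)\in\mc Q_\ell$, the uniqueness part of \cref{thm:gibbs-control} gives the minimizer claim. Alternatively, the whole corollary can be read off directly from \cref{thm:path-variational}.

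The one delicate point is that $\ell_t$ must be real valued, as assumed in \cref{sec:rl-control}. Here $-\log p(y_t\mid x_t)=+\infty$ wherever the likelihood vanishes. To handle this, I will either allow $\ell_t$ to be extended-valued with the convention $e^{-\infty}=0$, or restrict to the set where the likelihood is positive. I would then note that the proof of \cref{thm:gibbs-control} still goes through: for $q\in\mc Q_\ell$, integrability of $\abs{\ell_t}$ forces $p(y_t\mid X_t)\in(0,\infty)$ $q$-a.s., so $q\ll q^\star$ exactly as in the proof of \cref{thm:path-variational}. The truncation argument of \cref{lem:admissible-truncation} requires only $f\ge 0$, so it applies unchanged.
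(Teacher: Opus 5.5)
Your proposal is correct and follows the paper's route: the paper gives no separate proof and treats the corollary as a direct specialization of \cref{thm:gibbs-control}, which you reproduce by obtaining $Z_\ell=p(\ytraj)$, $q^\star=p(\xtraj\mid\ytraj)$, $\mc Q_\ell=\mc Q_{\mathrm{path}}$ and $\mc J_\ell=\mc J_{\mathrm{path}}$, which also makes it equivalent to \cref{thm:path-variational}. Your extra care about $\ell_t=+\infty$ where the likelihood vanishes is well placed, since the proof of \cref{thm:gibbs-control} uses real-valuedness to get $q\ll q^\star$; your replacement argument, that $q$-a.s.\ finiteness of $\abs{\ell_t}$ gives $q\ll q^\star$, is exactly the one used in the proof of \cref{thm:path-variational}.
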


\subsection{Policies, passive dynamics, and relative-entropy control}

To connect \cref{thm:gibbs-control} to reinforcement learning, introduce actions $A_t\in\R^r$ and passive state-action dynamics
\begin{equation}
p_0(\xtraj,\atraj)
=
\rho_0(x_0)\prod_{t=0}^{T-1}\mu_t(a_t\mid x_t)P_t(x_{t+1}\mid x_t,a_t),
\label{eq:passive-state-action-law}
\end{equation}
where $\rho_0$ is the initial density, $\mu_t(\cdot\mid x_t)$ is a passive action kernel, and $P_t(\cdot\mid x_t,a_t)$ is the state transition density. A Markov policy $\pi=\set{\pi_t}_{t=0}^{T-1}$ with $\pi_t(\cdot\mid x)\ll \mu_t(\cdot\mid x)$ induces the state-action path law
\begin{equation}
\qpi(\xtraj,\atraj)
=
\rho_0(x_0)\prod_{t=0}^{T-1}\pi_t(a_t\mid x_t)P_t(x_{t+1}\mid x_t,a_t).
\label{eq:policy-path-law}
\end{equation}
We write
\begin{equation}
\qpi^X(\xtraj)
:=
\int \qpi(\xtraj,\atraj)\dd \atraj
\end{equation}
for the induced state-path marginal, and similarly $p_0^X$ for the state marginal of the passive state-action law \cref{eq:passive-state-action-law}.

At this point three optimization domains must be kept distinct. One may optimize over all admissible state-action laws $q\ll p_0$ on the reference space \cref{eq:passive-state-action-law}; one may optimize only over the reachable subset $\set{\qpi:\pi\text{ admissible}}$ induced by policies with the fixed transition kernel $P_t$; and one may look only at the induced state-path marginals $\set{\qpi^X}$. \Cref{thm:gibbs-control} applies directly to the first domain, while posterior recovery by policies concerns the second and third domains and therefore needs an additional representability argument.

\begin{lemma}[Path-space KL decomposition]
\label{lem:policy-kl}
Assume $\qpi\ll p_0$. Then
\begin{equation}
\KL(\qpi\|p_0)
=
\E_{\qpi}\!\left[\sum_{t=0}^{T-1}\log\frac{\pi_t(A_t\mid X_t)}{\mu_t(A_t\mid X_t)}\right]
=
\sum_{t=0}^{T-1}\E_{\qpi}\!\bracks{\KL\parens{\pi_t(\cdot\mid X_t)\,\|\,\mu_t(\cdot\mid X_t)}}.
\label{eq:policy-kl-decomp}
\end{equation}
\end{lemma}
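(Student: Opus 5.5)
The plan is to compute the Radon--Nikodym derivative of $\qpi$ with respect to $p_0$ directly from the product forms \cref{eq:policy-path-law,eq:passive-state-action-law}, and then integrate its logarithm first jointly and then by the tower property.

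First, on the set where $p_0>0$, I would take the ratio of the two densities. The initial factors $\rho_0(x_0)$ and the transition factors $P_t(x_{t+1}\mid x_t,a_t)$ are common to both laws, so they cancel, leaving
\[
\frac{\qpi(\xtraj,\atraj)}{p_0(\xtraj,\atraj)}=\prod_{t=0}^{T-1}\frac{\pi_t(a_t\mid x_t)}{\mu_t(a_t\mid x_t)}
\qquad \qpi\text{-a.s.}
\]
This uses only that $\qpi$-almost surely every factor of $p_0$ is strictly positive and finite. That holds because $\qpi\ll p_0$, and because $\pi_t(\cdot\mid x)\ll\mu_t(\cdot\mid x)$ makes $\mu_t(a_t\mid x_t)>0$ wherever $\pi_t(a_t\mid x_t)>0$. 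Taking logarithms turns the product into a sum. Integrating against $\qpi$ then gives the first equality in \cref{eq:policy-kl-decomp}, since by definition $\KL(\qpi\|p_0)=\E_{\qpi}[\log(\dd\qpi/\dd p_0)]$.

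For the second equality, I would split the expectation termwise and condition on $X_t$ in each term. Under $\qpi$ the conditional law of $A_t$ given $X_t=x$ is $\pi_t(\cdot\mid x)$. This follows from the Markov factorization: integrating out $x_{t+1:T}$ and $a_{t+1:T-1}$ from \cref{eq:policy-path-law} leaves the factor $\pi_t(a_t\mid x_t)$ times the law of $(X_{0:t},A_{0:t-1})$. The inner conditional expectation is therefore exactly $\KL(\pi_t(\cdot\mid x)\|\mu_t(\cdot\mid x))$.

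The main obstacle is justifying the interchange of the sum with the expectation when some per-step terms might not be integrable. The log-ratios can take either sign, so linearity of expectation is not automatic. I would handle this with the standard fact that each conditional KL is nonnegative, and then argue through the chain rule for relative entropy. The chain rule says the KL of a joint law equals the KL of the marginal plus the expected conditional KL. It holds in $[0,\infty]$ without integrability assumptions, because all the terms are nonnegative.

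I would apply the chain rule inductively along the time index to the pairs $(X_t,A_t)$ and $X_{t+1}$. The transition steps contribute zero because the kernels $P_t$ coincide under both laws. The initial step contributes zero because $\rho_0$ is common to both. What remains is exactly the sum of the expected action-kernel KLs, which gives the second equality as an identity in $[0,\infty]$. When $\KL(\qpi\|p_0)<\infty$, every summand is finite, and the first equality then also follows by linearity, so both sides agree in all cases.
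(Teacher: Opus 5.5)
Your proposal is correct and follows the paper's proof. You cancel the common factors $\rho_0$ and $P_t$ in $\qpi/p_0$, take logarithms and $\qpi$-expectations for the first equality, and condition on $X_t$ (using $A_t\mid X_t\sim\pi_t(\cdot\mid X_t)$ under $\qpi$) for the second. Your chain-rule argument supplies rigor the paper omits for exchanging sum and expectation when $\KL(\qpi\|p_0)=\infty$. A lighter fix also works: each per-step log-ratio has negative part with $\qpi$-expectation at most $1$, since $\pi\log(\mu/\pi)\le(\mu-\pi)^+$, so linearity holds directly in $(-\infty,\infty]$.
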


\begin{proof}
Dividing \cref{eq:policy-path-law} by \cref{eq:passive-state-action-law} yields
\[
\frac{\qpi(\xtraj,\atraj)}{p_0(\xtraj,\atraj)}
=
\prod_{t=0}^{T-1}\frac{\pi_t(a_t\mid x_t)}{\mu_t(a_t\mid x_t)}.
\]
Taking logarithms and expectations under $\qpi$ proves the first identity. The second follows by conditional expectation with respect to $X_t$.
\end{proof}

With the stage cost $\ell_t(X_t)$ acting on states, define the admissible class on the state-action path space by
\begin{equation}
\mc Q_\ell^{\mathrm{SA}}
:=
\left\{
q:\ q\ll p_0,\;
\E_q\!\bracks{\sum_{t=0}^{T}\abs{\ell_t(X_t)}}<\infty,\;
\KL(q\|p_0)<\infty
\right\},
\label{eq:state-action-admissible-class}
\end{equation}
where $p_0$ now denotes the passive state-action law \cref{eq:passive-state-action-law}. This is the admissible class obtained by applying \cref{thm:gibbs-control} on the extended measurable space $(\xtraj,\atraj)$ with cost depending only on the state coordinates. For any $q\in\mc Q_\ell^{\mathrm{SA}}$, define the state-action-law functional
\begin{equation}
\mc J_\ell^{\mathrm{SA}}(q)
:=
\E_q\!\bracks{\sum_{t=0}^{T}\ell_t(X_t)}
\;+\;
\KL(q\|p_0),
\label{eq:state-action-functional}
\end{equation}
Then \cref{thm:gibbs-control,lem:policy-kl} show that applying the Gibbs variational identity on this state-action path space makes the policy optimization problem
\begin{equation}
\inf_{\pi}
\left\{
\E_{\qpi}\!\bracks{\sum_{t=0}^{T}\ell_t(X_t)}
+
\E_{\qpi}\!\left[\sum_{t=0}^{T-1}\log\frac{\pi_t(A_t\mid X_t)}{\mu_t(A_t\mid X_t)}\right]
	\right\}
	\label{eq:policy-objective}
\end{equation}
is equivalent to minimizing $\mc J_\ell^{\mathrm{SA}}(q)$ over the reachable family of policy-induced state-action path laws. This is the relative-entropy control or KL-control form underlying control-as-inference formulations \citep{toussaintstorkey2006,todorov2006,todorov2008,todorov2009,kappen2012,rawlik2012,levine2018}. The unrestricted Gibbs law from \cref{thm:gibbs-control} is therefore the benchmark on the full state-action path space, while the actual policy problem is its restriction to the reachable subset generated by the fixed kernels $P_t$.

\begin{corollary}[Conditional posterior recovery under representability]
\label{cor:policy-posterior}
Assume the passive state marginal satisfies $p_0^X(\xtraj)=p(\xtraj)$, where $p(\xtraj)$ is the Bayesian prior path density from \cref{eq:path-prior}, and let $\ell_t(x_t)=-\log p(y_t\mid x_t)$. Assume in addition that the unrestricted Gibbs law $q^\star$ obtained by applying \cref{thm:gibbs-control} on the state-action path space belongs to $\mc Q_\ell^{\mathrm{SA}}$, so that it is the unique optimizer of the unrestricted state-action problem. If that law is induced by some admissible policy $\pi^\star$, a restrictive representability assumption, then any optimizer of \cref{eq:policy-objective} induces exactly that law, and its state marginal is the Bayesian smoothing posterior:
\begin{equation}
q_{\pi^\star}^{X}(\xtraj) = p(\xtraj\mid \ytraj).
\end{equation}
If the policy class is restricted, the optimal induced state-action law is the reverse-KL projection of $q^\star$ onto the reachable family $\set{q_\pi:\pi\text{ admissible}}$; its state marginal need not be the reverse-KL projection of the smoothing posterior in state space.
\end{corollary}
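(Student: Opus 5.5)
The plan is to apply \cref{thm:gibbs-control} on the extended space $(\xtraj,\atraj)$ with reference law $p_0$ from \cref{eq:passive-state-action-law} and state-only costs $\ell_t(x_t)=-\log p(y_t\mid x_t)$. The argument then proceeds in three steps: reduce the policy problem to a restriction of the state-action functional, use the representability hypothesis to show the reachable infimum equals the unrestricted one, and identify the state marginal of the Gibbs law by integrating out the actions.

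For the first step, for any admissible policy $\pi$ with $\qpi\in\mc Q_\ell^{\mathrm{SA}}$, \cref{lem:policy-kl} rewrites the policy objective \cref{eq:policy-objective} as $\mc J_\ell^{\mathrm{SA}}(\qpi)$. \Cref{eq:gibbs-identity} then gives
\[
\mc J_\ell^{\mathrm{SA}}(\qpi)=\KL(\qpi\|q^\star)-\log Z_\ell ,
\]
where $Z_\ell=\int p_0\exp(-\sum_t\ell_t)\dd\xtraj\dd\atraj$. Since $\ell_t$ depends only on states, integrating out $\atraj$ yields $Z_\ell=\int p_0^X(\xtraj)\prod_t p(y_t\mid x_t)\dd\xtraj=p(\ytraj)$, using $p_0^X=p(\xtraj)$. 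This quantity is finite and positive by \cref{ass:ssm}, so \cref{thm:gibbs-control} applies.

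For the second step, by hypothesis $q^\star=q_{\pi^\star}\in\mc Q_\ell^{\mathrm{SA}}$, so the policy objective attains its global lower bound $-\log Z_\ell$ at $\pi^\star$. Any optimizer $\pi$ of \cref{eq:policy-objective} must therefore satisfy $\KL(\qpi\|q^\star)=0$, i.e.\ $\qpi=q^\star$ almost everywhere. This is the uniqueness in \cref{thm:gibbs-control} restricted to the reachable family, which contains the optimum.

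For the third step, I marginalize the Gibbs law:
\[
q^{\star,X}(\xtraj)=\frac{1}{Z_\ell}\exp\!\Big(-\sum_t\ell_t(x_t)\Big)\int p_0(\xtraj,\atraj)\dd\atraj=\frac{p(\xtraj)\prod_t p(y_t\mid x_t)}{p(\ytraj)},
\]
and by \cref{eq:path-posterior} this equals $p(\xtraj\mid\ytraj)$. Hence $q^X_{\pi^\star}=p(\xtraj\mid\ytraj)$.

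For the restricted-class statement, minimizing $\KL(\qpi\|q^\star)-\log Z_\ell$ over the reachable family is by definition a reverse-KL projection of $q^\star$ onto that family, by the same argument as \cref{cor:path-restricted}. The claim that the state marginal \emph{need not} be the projection of the smoothing posterior comes from the chain rule,
\[
\KL(\qpi\|q^\star)=\KL(\qpi^X\|q^{\star,X})+\E_{\qpi^X}\KL\big(\qpi(\atraj\mid\xtraj)\,\|\,q^\star(\atraj\mid\xtraj)\big).
\]
The second, conditional term couples to the choice of marginal, so minimizing the sum need not minimize the first term. Backing the ``need not'' with a concrete counterexample is the main obstacle; the other steps are routine. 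I would look for a small discrete one-step instance in which the reachable family contains a law whose marginal is closer to the posterior but whose action-conditional is far from $q^\star(\cdot\mid x)$. Failing that, I would state this part as a non-implication justified by the decomposition.
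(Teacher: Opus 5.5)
Your proof is correct and follows the paper's route: the Gibbs identity on the state-action space, then representability together with KL-zero uniqueness, then integrating out the actions using $p_0^X=p(\xtraj)$. It is in fact more explicit than the paper about $Z_\ell=p(\ytraj)$ and about why \emph{every} optimizer induces $q^\star$, and the paper gives no argument at all for the ``need not'' clause, which a one-step ($T=1$) linear-Gaussian example settles: with $\ell_0$ constant, $\mu_0=\mathcal N(0,1)$, $P_0(\cdot\mid x_0,a)=\mathcal N(a,\sigma^2)$, $\ell_1(x_1)=(x_1-y)^2/(2r)$, $\sigma^2>0$, $y\neq 0$ and $r>\sigma^2(1+\sigma^2)$, \cref{prop:soft-bellman} gives the optimal policy $\mathcal N\bigl(y/(1+r),\,r/(1+r)\bigr)$ and hence state mean $y/(1+r)$, whereas the posterior of $x_1$ is $\mathcal N\bigl((1+\sigma^2)y/(1+\sigma^2+r),\,(1+\sigma^2)r/(1+\sigma^2+r)\bigr)$, which a Gaussian policy reaches exactly, so the state-space reverse-KL projection is the posterior itself and differs from the optimal marginal.
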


\begin{proof}
By \cref{thm:gibbs-control} applied on the state-action path space and the hypothesis $q^\star\in\mc Q_\ell^{\mathrm{SA}}$, the unique optimizer over all admissible state-action laws is
\[
q^\star(\xtraj,\atraj)
\propto
p_0(\xtraj,\atraj)\prod_{t=0}^{T}p(y_t\mid x_t).
\]
If $q^\star=q_{\pi^\star}$ for some admissible policy $\pi^\star$, then $\pi^\star$ is optimal for \cref{eq:policy-objective} because that objective is just $\mc J_\ell^{\mathrm{SA}}$ restricted to the reachable family of state-action laws. Integrating out the actions gives
\[
q_{\pi^\star}^{X}(\xtraj)
\propto
p_0^X(\xtraj)\prod_{t=0}^{T}p(y_t\mid x_t)
=
p(\xtraj)\prod_{t=0}^{T}p(y_t\mid x_t),
\]
which is exactly the smoothing posterior by \cref{eq:path-posterior}. The restricted-family statement is the corresponding application of \cref{thm:gibbs-control} on the state-action path space.
\end{proof}

\begin{remark}[Why the representability assumption is nontrivial]
\label{rem:policy-representability}
The condition in \cref{cor:policy-posterior} is far from automatic in the present action-KL model. The unrestricted Gibbs law on state-action paths generally twists the effective one-step state evolution after conditioning on future costs, whereas the admissible policy class keeps the transition kernel $P_t(\cdot\mid x,a)$ fixed and can only reweight the action kernel. Exact posterior recovery by policies therefore requires additional structure beyond matching the likelihood cost and passive state marginal. It can hold in special cases, for example under deterministic dynamics or when the action variable parametrizes the needed transition twist, but not in general.
\end{remark}

\subsection{A precise one-step RL theorem}

We now formulate the filtering version requested in the introduction. At time $t$, let the RL ``action'' be the choice of an admissible analysis density $q_t\in\mc Q_t$ as in \cref{sec:variational-posterior}. Define the reward
\begin{equation}
\mc R_t(q_t)
:=
-\mc J_t(q_t)
=
-\E_{q_t}\!\bracks{-\log p(y_t\mid X_t)}
-\KL(q_t\|p_t^f).
\label{eq:one-step-rl-reward}
\end{equation}

\begin{theorem}[RL solution equals posterior solution in one-step filtering]
\label{thm:rl-one-step}
Under the assumptions of \cref{thm:one-step-variational},
\begin{equation}
\sup_{q_t\in\mc Q_t}\mc R_t(q_t)=\log p(y_t\mid y_{0:t-1}).
\end{equation}
If, in addition, $p_t^a\in\mc Q_t$, then the reward functional \cref{eq:one-step-rl-reward} has the unique maximizer
\begin{equation}
q_t^\star = p_t^a.
\end{equation}
More generally, if actions are parameterized by a family of transport maps or policies whose induced laws form a set $\mc F_t\subset\mc Q_t$, then any optimal action induces a law solving
\begin{equation}
\argmin_{q_t\in \mc F_t}\KL(q_t\|p_t^a).
\end{equation}
In particular, if $p_t^a\in\mc F_t$, then the optimal induced law is exactly $p_t^a$.
\end{theorem}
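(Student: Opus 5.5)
The plan is to reduce everything to \cref{thm:one-step-variational} and \cref{cor:one-step-restricted}. By \cref{eq:one-step-rl-reward}, $\mc R_t=-\mc J_t$ on $\mc Q_t$. Hence the supremum of $\mc R_t$ equals minus the infimum of $\mc J_t$, and maximizers of $\mc R_t$ are exactly minimizers of $\mc J_t$, over any subset of $\mc Q_t$.

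\textbf{First claim.} I would write
\[
\sup_{q_t\in\mc Q_t}\mc R_t(q_t)=-\inf_{q_t\in\mc Q_t}\mc J_t(q_t),
\]
and then invoke \cref{eq:one-step-infimum}, which gives the value $\log p(y_t\mid y_{0:t-1})$. No new truncation argument is needed. Sharpness of the infimum was already obtained in \cref{thm:one-step-variational} through \cref{lem:admissible-truncation}, so the supremum holds even when it is not attained.

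\textbf{Unique maximizer.} Under the extra hypothesis $p_t^a\in\mc Q_t$, the unique-minimizer part of \cref{thm:one-step-variational} transfers directly to the unique maximizer $q_t^\star=p_t^a$. Uniqueness is understood almost everywhere, since it rests on $\KL(q_t\|p_t^a)=0$ forcing $q_t=p_t^a$ a.e.

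\textbf{Parameterized actions.} Let actions $\theta$ be transport maps or policies inducing laws $q_\theta$, with $\mc F_t=\set{q_\theta}\subset\mc Q_t$. The reward depends on the action only through its induced law. So $\theta^\star$ is optimal iff $q_{\theta^\star}$ maximizes $\mc R_t$ over $\mc F_t$, i.e.\ minimizes $\mc J_t$ over $\mc F_t$. By \cref{eq:one-step-identity}, on $\mc F_t$ we have
\[
\mc J_t(q_t)=\KL(q_t\|p_t^a)+\text{const},
\]
which is exactly \cref{cor:one-step-restricted}. The argmin sets therefore coincide.

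\textbf{Case $p_t^a\in\mc F_t$.} Here the reverse KL over $\mc F_t$ attains the value $0$ at $p_t^a$, and only there up to null sets. So every optimal action induces exactly $p_t^a$. Non-identifiability of the parameter does not matter, because the statement is about induced laws.

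\textbf{Main obstacle.} The only subtle point is bookkeeping rather than analysis. I must be careful that optimality is phrased for induced laws, not parameters, and that the supremum in the first claim need not be attained. The latter is why the infimum statement, not the minimizer statement, is the one to invoke there.
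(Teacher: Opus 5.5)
Your proposal is correct and follows the paper's proof essentially verbatim. Both arguments use $\mathcal R_t=-\mathcal J_t$ to transfer the infimum and unique-minimizer statements of \cref{thm:one-step-variational}, and both then invoke \cref{cor:one-step-restricted} for the restricted family; your remarks that the supremum need not be attained and that optimality concerns induced laws rather than parameters make explicit what the paper leaves implicit.
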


\begin{proof}
The identity $\mc R_t(q_t)=-\mc J_t(q_t)$ and \cref{thm:one-step-variational} give the supremum formula. If $p_t^a\in\mc Q_t$, the unique-maximizer statement is equivalent to the corresponding minimizer statement for $\mc J_t$. The restricted-family claim follows from \cref{cor:one-step-restricted}.
\end{proof}

\begin{remark}[Why RMSE rewards do not imply posterior recovery]
\label{rem:rmse-insufficient}
\Cref{thm:rl-one-step} is exact because the reward is the negative of the likelihood-plus-KL objective. If the reward is replaced by a generic error metric, such as Euclidean RMSE, then the optimizer is generally a different Gibbs law or a different point-estimation problem. That distinction will be central in \cref{sec:hammoud}.
\end{remark}

\subsection{Bellman recursion for the action-KL model}

For the policy problem \cref{eq:policy-objective}, let $V_t(x)$ denote the optimal cost-to-go starting from state $x$ at time $t$:
\begin{equation}
V_t(x)
:=
\inf_{\pi_{t:T-1}}
\E\!\left[
\sum_{s=t}^{T}\ell_s(X_s)
+
\sum_{s=t}^{T-1}\log\frac{\pi_s(A_s\mid X_s)}{\mu_s(A_s\mid X_s)}
\middle|\,
X_t=x
\right].
\label{eq:policy-value-function}
\end{equation}
The correct dynamic-programming recursion for the present action-KL model is obtained by minimizing over the action kernel at each state.

\begin{proposition}[Bellman recursion and optimal policy for the action-KL model]
\label{prop:soft-bellman}
Assume $V_{t+1}$ is finite and define
\begin{equation}
G_t(x,a)
:=
\int P_t(x'\mid x,a)V_{t+1}(x')\dd x'.
\label{eq:action-cost-to-go}
\end{equation}
Suppose that, for every $x$ and $t=T-1,\dots,0$,
\begin{equation}
0<
\int \mu_t(a\mid x)e^{-G_t(x,a)}\dd a
<\infty.
\label{eq:action-gibbs-normalizer}
\end{equation}
Then the optimal value function for \cref{eq:policy-objective} satisfies
\begin{equation}
V_t(x)
=
\ell_t(x)
-\log
\int \mu_t(a\mid x)
\exp\!\left(-\int P_t(x'\mid x,a)V_{t+1}(x')\dd x'\right)\dd a,
\label{eq:soft-bellman}
\end{equation}
with terminal condition $V_T(x)=\ell_T(x)$. Moreover,
\begin{equation}
\pi_t^\star(a\mid x)
=
\frac{
\mu_t(a\mid x)\exp\!\left(-\int P_t(x'\mid x,a)V_{t+1}(x')\dd x'\right)
}{
\int \mu_t(b\mid x)\exp\!\left(-\int P_t(x'\mid x,b)V_{t+1}(x')\dd x'\right)\dd b
}.
\label{eq:optimal-policy-soft}
\end{equation}
\end{proposition}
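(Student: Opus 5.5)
We argue by backward induction on $t$. The two ingredients are the dynamic-programming principle for the Markov policy problem \cref{eq:policy-objective} and a one-step Gibbs variational identity: the finite-dimensional action-space analogue of \cref{thm:gibbs-control}, or equivalently \cref{lem:admissible-truncation} with $\mu=\mu_t(\cdot\mid x)$ and $f=e^{-G_t(x,\cdot)}$.

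\emph{Base case.} At $t=T$ there are no remaining actions, so \cref{eq:policy-value-function} reduces to $V_T(x)=\ell_T(x)$.

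\emph{Inductive step.} Suppose $V_{t+1}$ is finite and is the optimal cost-to-go from time $t+1$. Condition on $X_t=x$ and split the objective into three parts: the stage cost $\ell_t(x)$, the action-KL term at time $t$, and the cost accumulated from time $t+1$ onward. By the tower property over $(A_t,X_{t+1})$ and the Markov structure of \cref{eq:policy-path-law}, the future cost under any continuation policy is at least $\E[V_{t+1}(X_{t+1})\mid X_t=x]$. This lower bound is approached, or attained, by choosing $\pi_{t+1:T-1}$ optimally, and that choice does not depend on $\pi_t$. Integrating $X_{t+1}$ against $P_t(\cdot\mid x,a)$ then gives
\[
V_t(x)=\ell_t(x)+\inf_{\pi_t(\cdot\mid x)\ll\mu_t(\cdot\mid x)}\Big\{\E_{\pi_t(\cdot\mid x)}[G_t(x,A)]+\KL\big(\pi_t(\cdot\mid x)\,\|\,\mu_t(\cdot\mid x)\big)\Big\}.
\]
The inner problem has exactly the form of \cref{eq:gibbs-functional} on the action space. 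Its normalizer is $\int\mu_t(a\mid x)e^{-G_t(x,a)}\dd a$, which is finite and positive by \cref{eq:action-gibbs-normalizer}. For every admissible $\pi_t(\cdot\mid x)$, the Gibbs identity gives
\[
\E_{\pi_t}[G_t]+\KL(\pi_t\|\mu_t)=\KL(\pi_t\|\pi_t^\star(\cdot\mid x))-\log\int\mu_t(a\mid x)e^{-G_t(x,a)}\dd a .
\]
Consequently the infimum equals minus the log-normalizer, which yields \cref{eq:soft-bellman}. The infimum is attained, uniquely up to null sets, at the tilted kernel \cref{eq:optimal-policy-soft} whenever that kernel is admissible. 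When it is not admissible, the truncation argument of \cref{lem:admissible-truncation} still identifies the infimum. Either way the formula for $V_t$ holds, and the induction proceeds.

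\emph{Main obstacle.} The difficult step is justifying the interchange of the infimum with the conditional expectation. Concretely, we must show that minimizing jointly over $(\pi_t,\pi_{t+1:T-1})$ is the same as minimizing first over the future policy pointwise in $x_{t+1}$. This requires measurable selection of near-optimal continuation policies and enough integrability that $G_t(x,a)$ is well defined.

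I would handle this in two directions.

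\begin{itemize}
\item[(a)] \emph{Lower bound.} For any $\pi$, bound the conditional future cost below by $V_{t+1}(X_{t+1})$ directly from the definition.
\item[(b)] \emph{Achievability.} Note that the explicit tilted kernels \cref{eq:optimal-policy-soft} at later times are measurable in $x$, because they are given by closed-form integrals. These kernels attain $V_{t+1}$ by the inductive hypothesis, so no abstract selection theorem is needed.
\end{itemize}

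Combining (a) and (b) shows that the composed Markov policy $\pi^\star$ attains $V_t$. This closes the induction.
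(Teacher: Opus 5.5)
Your proposal is correct and follows essentially the same route as the paper: the dynamic-programming principle reduces $V_t(x)$ to $\ell_t(x)$ plus the one-step infimum of $\KL(\nu\|\mu_t(\cdot\mid x))+\int G_t(x,a)\,\nu(\dd a)$, and the one-step Gibbs identity on the action space evaluates this infimum as $-\log\int\mu_t(a\mid x)e^{-G_t(x,a)}\dd a$, with the tilted kernel as minimizer. Your lower-bound/achievability justification of the Bellman step and your separation of the admissible and non-admissible cases go beyond the paper, which simply invokes the Bellman principle and asserts the minimizer; note only that in the non-admissible case step (b) cannot use the tilted kernels and must instead use the truncated kernels from \cref{lem:admissible-truncation}, which are equally explicit and measurable.
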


\begin{proof}
The Bellman principle gives
\[
V_T(x)=\ell_T(x)
\]
and, for $t<T$,
\[
V_t(x)
=
\ell_t(x)
+
\inf_{\nu\ll\mu_t(\cdot\mid x)}
\left\{
\KL(\nu\|\mu_t(\cdot\mid x))
+
\int G_t(x,a)\,\nu(\dd a)
\right\},
\]
where $\nu$ ranges over admissible one-step action kernels at the current state. For fixed $x$, define
\[
\nu_t^\star(a\mid x)
:=
\frac{\mu_t(a\mid x)e^{-G_t(x,a)}}{\int \mu_t(b\mid x)e^{-G_t(x,b)}\dd b}.
\]
Then the one-step Gibbs identity on the action space yields
\[
\KL(\nu\|\mu_t(\cdot\mid x))
+
\int G_t(x,a)\,\nu(\dd a)
=
\KL(\nu\|\nu_t^\star(\cdot\mid x))
-\log\int \mu_t(a\mid x)e^{-G_t(x,a)}\dd a.
\]
Taking the infimum over $\nu$ proves \cref{eq:soft-bellman}, and the unique minimizer is precisely $\nu_t^\star(\cdot\mid x)$, which is the displayed optimal policy formula.
\end{proof}

\begin{remark}[Relation to desirability formulas]
\label{rem:desirability-caveat}
\Cref{eq:soft-bellman} is the correct Bellman recursion for the action-KL model defined in \cref{eq:passive-state-action-law,eq:policy-path-law}. It differs from the linear desirability recursion familiar from transition-kernel or linearly solvable control formulations, where the control acts directly on the next-state kernel \citep{todorov2006,todorov2009}. In the present model the expectation of $V_{t+1}$ under $P_t(\cdot\mid x,a)$ appears inside the exponential. By Jensen's inequality, replacing it by $\int P_t(x'\mid x,a)e^{-V_{t+1}(x')}\dd x'$ is generally incorrect except in special cases such as deterministic transitions or constant downstream value.
\end{remark}

\begin{remark}[Temperatures and tempered posteriors]
\label{rem:temperature}
If one replaces \cref{eq:policy-objective} by
\[
\E_{\qpi}\!\bracks{\sum_t \beta\,\ell_t(X_t)}
\;+\;
\alpha\,\KL(\qpi\|p_0)
\]
with $\alpha,\beta>0$, then the optimizer becomes
\[
q^\star(\xtraj,\atraj)
\propto
p_0(\xtraj,\atraj)\exp\!\left(-\frac{\beta}{\alpha}\sum_{t=0}^{T}\ell_t(x_t)\right).
\]
For likelihood costs $\ell_t=-\log p(y_t\mid x_t)$ this is a tempered posterior on the relevant path space. Exact Bayesian recovery therefore requires the reward scaling and KL weight to be calibrated correctly, and in the policy formulation it still requires the corresponding Gibbs law to be reachable by the policy class.
\end{remark}

%% file: 06_linear_gaussian.tex
\section{Linear-Gaussian specialization: Kalman filter}
\label{sec:linear-gaussian}

We now show how the previous variational principles reduce to the classical Kalman analysis step \citep{kalman1960}. The key point is that in the linear-Gaussian case the one-step posterior is itself Gaussian, so the unrestricted variational minimizer from \cref{thm:one-step-variational} already lies inside the Gaussian family.

\begin{assumption}[Linear-Gaussian forecast and observation model]
\label{ass:linear-gaussian}
At time $t$, let
\begin{align}
X_t &\sim p_t^f = \mathcal N(m_t^f,P_t^f), \\
Y_t &= H_tX_t + \eta_t, \qquad \eta_t\sim \mathcal N(0,R_t),
\end{align}
where $P_t^f\in\R^{n\times n}$ and $R_t\in\R^{m\times m}$ are symmetric positive definite and $\eta_t$ is independent of $X_t$.
\end{assumption}

\begin{theorem}[Kalman analysis posterior]
\label{thm:kalman-posterior}
Under \cref{ass:linear-gaussian}, the analysis posterior is Gaussian:
\begin{equation}
p_t^a = \mathcal N(m_t^a,P_t^a).
\end{equation}
Its information-form parameters are
\begin{align}
\parens{P_t^a}^{-1} &= \parens{P_t^f}^{-1} + H_t\transpose R_t^{-1}H_t, \label{eq:kalman-information-cov}\\
m_t^a &= P_t^a\parens{\parens{P_t^f}^{-1}m_t^f + H_t\transpose R_t^{-1}y_t}. \label{eq:kalman-information-mean}
\end{align}
Equivalently, with Kalman gain
\begin{equation}
K_t := P_t^fH_t\transpose\parens{H_tP_t^fH_t\transpose + R_t}^{-1},
\label{eq:kalman-gain}
\end{equation}
the covariance-form update is
\begin{align}
P_t^a &= \parens{\Id-K_tH_t}P_t^f, \label{eq:kalman-covariance-form}\\
m_t^a &= m_t^f + K_t\parens{y_t-H_tm_t^f}. \label{eq:kalman-mean-form}
\end{align}
\end{theorem}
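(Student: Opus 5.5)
We start from the analysis recursion \cref{eq:analysis-recursion} of \cref{prop:forecast-analysis}, which gives $p_t^a(x)\propto p(y_t\mid x)\,p_t^f(x)$. Under \cref{ass:linear-gaussian} both factors are Gaussian exponentials, so $-\log p_t^a(x)$ equals, up to an additive constant independent of $x$,
\[
\tfrac12 (x-m_t^f)\transpose (P_t^f)^{-1}(x-m_t^f)+\tfrac12 (y_t-H_tx)\transpose R_t^{-1}(y_t-H_tx).
\]
We expand this quadratic in $x$ and collect terms. The quadratic part is $\tfrac12 x\transpose\bigl((P_t^f)^{-1}+H_t\transpose R_t^{-1}H_t\bigr)x$, and the linear part is $-x\transpose\bigl((P_t^f)^{-1}m_t^f+H_t\transpose R_t^{-1}y_t\bigr)$. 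The matrix $\Lambda:=(P_t^f)^{-1}+H_t\transpose R_t^{-1}H_t$ is symmetric positive definite, being a positive definite matrix plus a positive semidefinite one. Completing the square therefore shows that $p_t^a$ is proportional to a Gaussian density with precision $\Lambda$ and mean $\Lambda^{-1}\bigl((P_t^f)^{-1}m_t^f+H_t\transpose R_t^{-1}y_t\bigr)$. Since both are probability densities, normalization forces equality, which gives \cref{eq:kalman-information-cov,eq:kalman-information-mean}. The evidence $Z_t$ is automatically finite and positive here, being a Gaussian integral.

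Next we convert to covariance form. Let $S_t:=H_tP_t^fH_t\transpose+R_t$, which is positive definite and hence invertible. By the Sherman--Morrison--Woodbury identity,
\[
\Lambda^{-1}=P_t^f-P_t^fH_t\transpose S_t^{-1}H_tP_t^f=(\Id-K_tH_t)P_t^f,
\]
which is \cref{eq:kalman-covariance-form}. Alternatively, one can check directly that $\bigl((P_t^f)^{-1}+H_t\transpose R_t^{-1}H_t\bigr)(\Id-K_tH_t)P_t^f=\Id$.

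For the mean, we first record the gain identity $P_t^aH_t\transpose R_t^{-1}=K_t$. To verify it, multiply both sides on the left by $\Lambda$ and on the right by $S_t$. This reduces the claim to
\[
H_t\transpose R_t^{-1}S_t=\Lambda P_t^fH_t\transpose,
\]
and both sides expand to $H_t\transpose+H_t\transpose R_t^{-1}H_tP_t^fH_t\transpose$. Then
\[
m_t^a=P_t^a(P_t^f)^{-1}m_t^f+K_ty_t=(\Id-K_tH_t)m_t^f+K_ty_t,
\]
which is \cref{eq:kalman-mean-form}.

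The main obstacle is only bookkeeping in the matrix identities: the Woodbury step and the gain identity $P_t^aH_t\transpose R_t^{-1}=K_t$. We handle both by the direct multiplication checks above rather than by manipulating inverses of possibly singular products, since $H_t$ need not be square or invertible. Invertibility is used only for $P_t^f$, $R_t$, $S_t$ and $\Lambda$, all of which are positive definite.
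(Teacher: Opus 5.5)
Your proposal is correct and follows essentially the paper's route: Bayes' rule with the Gaussian forecast and likelihood, completing the square with the positive definite precision $(P_t^f)^{-1}+H_t\transpose R_t^{-1}H_t$, and then the Woodbury identity for the covariance form. Your explicit check of the gain identity $P_t^aH_t\transpose R_t^{-1}=K_t$ spells out the step from the information-form mean to the Kalman mean form, which the paper only describes as a substitution.
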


\begin{proof}
Bayes' rule gives
\[
p_t^a(x_t)
\propto
\exp\!\left(
-\frac12(x_t-m_t^f)\transpose (P_t^f)^{-1}(x_t-m_t^f)
-\frac12(y_t-H_tx_t)\transpose R_t^{-1}(y_t-H_tx_t)
\right).
\]
Expanding the quadratic form in $x_t$ yields
\begin{align*}
&(x_t-m_t^f)\transpose (P_t^f)^{-1}(x_t-m_t^f)
+(y_t-H_tx_t)\transpose R_t^{-1}(y_t-H_tx_t) \\
&\qquad=
x_t\transpose\parens{(P_t^f)^{-1}+H_t\transpose R_t^{-1}H_t}x_t
-2x_t\transpose\parens{(P_t^f)^{-1}m_t^f + H_t\transpose R_t^{-1}y_t}
+c(y_t),
\end{align*}
where $c(y_t)$ does not depend on $x_t$. Since $(P_t^f)^{-1}+H_t\transpose R_t^{-1}H_t$ is symmetric positive definite, completing the square gives \cref{eq:kalman-information-cov,eq:kalman-information-mean}. Applying the Woodbury identity,
\[
\parens{(P_t^f)^{-1}+H_t\transpose R_t^{-1}H_t}^{-1}
=
P_t^f - P_t^fH_t\transpose\parens{H_tP_t^fH_t\transpose+R_t}^{-1}H_tP_t^f,
\]
yields \cref{eq:kalman-covariance-form}. Substituting this identity into \cref{eq:kalman-information-mean} gives \cref{eq:kalman-mean-form}. The details are summarized again in \cref{app:matrix-identities}.
\end{proof}

\subsection{Kalman update from the central variational theorem}

The previous theorem is the direct Bayesian calculation. We now show that the same formulas arise from minimizing the one-step variational functional over Gaussian candidate laws.

\begin{proposition}[Gaussian minimization of the one-step functional]
\label{prop:gaussian-functional}
Under \cref{ass:linear-gaussian}, let $q=\mathcal N(m,P)$ with $P$ symmetric positive definite. Then
\begin{align}
\mc J_t(q)
={}&
\frac12 (H_tm-y_t)\transpose R_t^{-1}(H_tm-y_t)
+\frac12\tr\parens{H_t\transpose R_t^{-1}H_tP}
\nonumber\\
&\quad
+\frac12 (m-m_t^f)\transpose (P_t^f)^{-1}(m-m_t^f)
+\frac12\tr\parens{(P_t^f)^{-1}P}
\nonumber\\
&\quad
-\frac12\log\det P
+\frac12\log\det P_t^f
-\frac n2
+c_t,
\label{eq:gaussian-J}
\end{align}
where $c_t$ depends on $y_t$ and $R_t$ but not on $(m,P)$. The unique minimizer over Gaussian $q$ is the posterior Gaussian from \cref{thm:kalman-posterior}; equivalently, the first-order conditions are exactly \cref{eq:kalman-information-cov,eq:kalman-information-mean}.
\end{proposition}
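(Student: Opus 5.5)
We split $\mc J_t(q)$ into its two pieces from \cref{eq:one-step-functional} and evaluate each in closed form for $q=\mathcal N(m,P)$. First we check admissibility. Since $p_t^f$ has full support and $q$ is nondegenerate Gaussian, $q\ll p_t^f$. The log-likelihood is a quadratic polynomial in $x$, so it is $q$-integrable. The Gaussian–Gaussian KL divergence is finite. Hence every such $q$ lies in $\mc Q_t$, and \cref{thm:one-step-variational} applies.

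For the likelihood term we write
\[
-\log p(y_t\mid x)=\tfrac12(y_t-H_tx)\transpose R_t^{-1}(y_t-H_tx)+\tfrac m2\log(2\pi)+\tfrac12\log\det R_t .
\]
We then use the standard identity $\E_q[(X-a)\transpose A(X-a)]=(m-a)\transpose A(m-a)+\tr(AP)$, applied with the residual $H_tX-y_t$. This produces the first two terms of \cref{eq:gaussian-J}, and the constant becomes $c_t$. For the KL term we use the closed form
\[
\KL(\mathcal N(m,P)\|\mathcal N(m_t^f,P_t^f))=\tfrac12\bigl[\tr((P_t^f)^{-1}P)+(m-m_t^f)\transpose(P_t^f)^{-1}(m-m_t^f)-n+\log\det P_t^f-\log\det P\bigr].
\]
This follows by the same quadratic-expectation identity plus the Gaussian entropy. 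Summing the two pieces gives \cref{eq:gaussian-J}.

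For the minimizer we separate the variables, since \cref{eq:gaussian-J} is a sum of an $m$-part and a $P$-part. The $m$-part is a strictly convex quadratic with Hessian $(P_t^f)^{-1}+H_t\transpose R_t^{-1}H_t$. Its stationarity condition is $\bigl((P_t^f)^{-1}+H_t\transpose R_t^{-1}H_t\bigr)m=(P_t^f)^{-1}m_t^f+H_t\transpose R_t^{-1}y_t$, which is \cref{eq:kalman-information-mean} once $P_t^a$ is identified.

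The $P$-part is $\tfrac12\tr(\Lambda P)-\tfrac12\log\det P$ with $\Lambda:=(P_t^f)^{-1}+H_t\transpose R_t^{-1}H_t$. Its gradient $\tfrac12(\Lambda-P^{-1})$ vanishes exactly at $P=\Lambda^{-1}$, which is \cref{eq:kalman-information-cov}. Uniqueness over the cone of symmetric positive definite matrices follows because $-\log\det$ is strictly convex there and the trace term is linear.

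Finally, \cref{thm:kalman-posterior} shows that $(m_t^a,P_t^a)$ is the posterior $p_t^a$. So the Gaussian minimizer coincides with $p_t^a$, which is consistent with \cref{thm:one-step-variational}, because $p_t^a$ is Gaussian and belongs to $\mc Q_t$. As a shortcut, uniqueness could be read off \cref{thm:one-step-variational} directly, since $p_t^a$ lies in the Gaussian subfamily. The computation above is nonetheless needed to show that the first-order conditions are exactly the information-form equations.

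The main obstacle is only bookkeeping: making sure every $(m,P)$-independent quantity is correctly gathered into $c_t$, while the $\tfrac12\log\det P_t^f-\tfrac n2$ terms are kept explicit as displayed. The matrix calculus for the $\log\det$ term is standard.
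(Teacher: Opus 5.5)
Your proposal is correct and follows the paper's proof essentially step for step: it evaluates the expected Mahalanobis term and the Gaussian KL in closed form, then imposes stationarity in $m$ and $P$ and uses strict convexity for uniqueness (the paper puts the same derivative and convexity details in its appendix). Your explicit check that every nondegenerate Gaussian lies in $\mc Q_t$ is a valid small addition the paper leaves implicit, and so is your remark that uniqueness also follows directly from \cref{thm:one-step-variational} because $p_t^a$ is Gaussian.
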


\begin{proof}
The expectation of the quadratic observation term under $q$ is
\[
\E_q\!\bracks{(y_t-H_tX_t)\transpose R_t^{-1}(y_t-H_tX_t)}
=
(y_t-H_tm)\transpose R_t^{-1}(y_t-H_tm)
+\tr\parens{H_t\transpose R_t^{-1}H_tP}.
\]
The Gaussian KL divergence satisfies
\begin{align*}
\KL\parens{\mathcal N(m,P)\,\|\,\mathcal N(m_t^f,P_t^f)}
= {}&
\frac12\tr\parens{(P_t^f)^{-1}P}
+\frac12(m-m_t^f)\transpose (P_t^f)^{-1}(m-m_t^f)
\nonumber\\
&\quad
-\frac n2
+\frac12\log\det P_t^f
-\frac12\log\det P.
\end{align*}
Substituting these expressions into \cref{eq:one-step-functional} gives \cref{eq:gaussian-J}. Differentiating with respect to $m$ and $P$ yields the stationarity equations
\begin{align*}
\parens{(P_t^f)^{-1}+H_t\transpose R_t^{-1}H_t}m
&=
(P_t^f)^{-1}m_t^f + H_t\transpose R_t^{-1}y_t, \\
P^{-1}
&=
(P_t^f)^{-1}+H_t\transpose R_t^{-1}H_t.
\end{align*}
These are exactly \cref{eq:kalman-information-cov,eq:kalman-information-mean}. Strict convexity of \cref{eq:gaussian-J} in $(m,P)$ gives uniqueness. A fuller derivation appears in \cref{app:gaussian-minimization}.
\end{proof}

\begin{corollary}[The Kalman analysis as a variational optimum]
\label{cor:kalman-variational}
In the linear-Gaussian setting, the analysis posterior from \cref{thm:kalman-posterior} is the unique global minimizer of the one-step functional \cref{eq:one-step-functional}. Since the posterior itself is Gaussian, no approximation enters at the variational level.
\end{corollary}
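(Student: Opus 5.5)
The plan is to combine \cref{thm:one-step-variational} with \cref{thm:kalman-posterior}. By \cref{thm:one-step-variational}, whenever $p_t^a\in\mc Q_t$ the analysis posterior is the unique minimizer of $\mc J_t$ over all of $\mc Q_t$, and the minimal value equals $-\log p(y_t\mid y_{0:t-1})$. So everything reduces to checking that, under \cref{ass:linear-gaussian}, the Gaussian law $p_t^a=\mathcal N(m_t^a,P_t^a)$ from \cref{thm:kalman-posterior} belongs to $\mc Q_t$. Before that, I will record that \cref{ass:ssm}'s evidence condition holds at time $t$. Here $Z_t$ is the Gaussian density of $\mathcal N(H_tm_t^f,\,H_tP_t^fH_t\transpose+R_t)$ evaluated at $y_t$, which is finite and strictly positive because this covariance is positive definite.

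The membership check has three items.
\begin{itemize}
\item Absolute continuity $p_t^a\ll p_t^f$ is immediate, since both are nondegenerate Gaussians with full support on $\R^n$.
\item For the likelihood term, $-\log p(y_t\mid x_t)=\tfrac12(y_t-H_tx_t)\transpose R_t^{-1}(y_t-H_tx_t)+\tfrac12\log\det(2\pi R_t)$. Its absolute value is bounded by a quadratic polynomial in $x_t$ plus a constant, so it is integrable under any Gaussian. Explicitly, the expectation equals $\tfrac12(y_t-H_tm_t^a)\transpose R_t^{-1}(y_t-H_tm_t^a)+\tfrac12\tr(H_t\transpose R_t^{-1}H_tP_t^a)+\text{const}$, the same computation used in the proof of \cref{prop:gaussian-functional}.
\item The term $\KL(p_t^a\|p_t^f)$ is given by the closed-form Gaussian KL formula quoted in \cref{prop:gaussian-functional}. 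It is finite because $P_t^a$ and $P_t^f$ are both symmetric positive definite; positive definiteness of $P_t^a$ follows from \cref{eq:kalman-information-cov}.
\end{itemize}

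With $p_t^a\in\mc Q_t$ established, \cref{thm:one-step-variational} says $p_t^a$ is the unique global minimizer of \cref{eq:one-step-functional} over $\mc Q_t$, not merely over Gaussians. Combined with \cref{thm:kalman-posterior}, this minimizer is $\mathcal N(m_t^a,P_t^a)$ with the Kalman formulas.

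For the second sentence ("no approximation enters"), I will note the following. Since the global minimizer lies in the Gaussian family $\mc F_t\subset\mc Q_t$, \cref{cor:one-step-restricted} implies the restricted Gaussian minimization has the same minimizer, with $\KL(q_t\|p_t^a)=0$. This is consistent with \cref{prop:gaussian-functional}: the Gaussian reverse-KL projection coincides with the exact posterior.

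The only step needing care is the integrability in the second item. Even that is routine, since Gaussians have all moments. I expect no real obstacle; the corollary is essentially bookkeeping of admissibility.
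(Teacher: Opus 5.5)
Your proposal is correct and follows the paper's route. The paper gives no separate proof: the corollary is read off from \cref{thm:one-step-variational}, which gives the unique global minimizer once $p_t^a\in\mc Q_t$, together with \cref{thm:kalman-posterior}, which shows the posterior is Gaussian, with \cref{prop:gaussian-functional} confirming that the Gaussian-restricted problem gives the same answer. Your explicit checks that $0<Z_t<\infty$ and $p_t^a\in\mc Q_t$ supply the admissibility verification that the paper leaves implicit despite flagging it in \cref{rem:one-step-admissibility}.
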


\subsection{Posterior means and deterministic estimators}

\begin{proposition}[Posterior mean under quadratic loss]
\label{prop:posterior-mean-quadratic}
Let $X$ be any square-integrable random vector and let $\pi(\cdot\mid y)$ denote a posterior law of $X$ given $Y=y$. The unique minimizer of
\[
a\mapsto \int \norm{x-a}^2\,\pi(\dd x\mid y)
\]
is the posterior mean $\int x\,\pi(\dd x\mid y)$. In the linear-Gaussian setting of \cref{ass:linear-gaussian}, this minimizer is $m_t^a$.
\end{proposition}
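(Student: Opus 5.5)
The plan is the standard bias-variance decomposition of the posterior expected squared loss, followed by the explicit linear-Gaussian mean from \cref{thm:kalman-posterior}.

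First, fix $y$ and set $\bar x:=\int x\,\pi(\dd x\mid y)$. This posterior mean exists because square integrability of $X$ gives a finite posterior second moment $\int\norm{x}^2\pi(\dd x\mid y)$ for almost every $y$ (by the tower property, $\E\norm{X}^2=\int\!\int\norm{x}^2\pi(\dd x\mid y)\,p(\dd y)<\infty$). For any $a\in\R^n$ I write $x-a=(x-\bar x)+(\bar x-a)$ and expand:
\[
\int \norm{x-a}^2\pi(\dd x\mid y)
=
\int \norm{x-\bar x}^2\pi(\dd x\mid y)
+2\inner{\bar x-a}{\textstyle\int (x-\bar x)\,\pi(\dd x\mid y)}
+\norm{\bar x-a}^2 .
\]
The cross term vanishes by the definition of $\bar x$. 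Hence the objective equals the posterior trace-variance plus $\norm{\bar x-a}^2$. This is minimized exactly at $a=\bar x$, and the minimum is unique because $\norm{\bar x-a}^2>0$ for $a\ne\bar x$ while the first term is finite and independent of $a$.

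Second, for the linear-Gaussian claim, \cref{thm:kalman-posterior} identifies the posterior as $p_t^a=\mathcal N(m_t^a,P_t^a)$. Its mean is $m_t^a$, and its second moment is finite since $P_t^a$ is positive definite. The first step therefore applies directly and yields the minimizer $m_t^a$.

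The only delicate point is measure-theoretic. The conditional second moment is finite only for almost every $y$, so the statement should be read for $y$ outside a null set, or under the direct hypothesis that $\pi(\cdot\mid y)$ has a finite second moment. In the Gaussian case this holds for every $y$, so the issue disappears there. No further obstacle is expected.
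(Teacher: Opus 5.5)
Your proposal is correct and follows the same route as the paper: split $\int \norm{x-a}^2\,\pi(\dd x\mid y)$ into the posterior spread about the mean plus $\norm{m(y)-a}^2$, note that the cross term vanishes by definition of the mean, and read off $m_t^a$ from \cref{thm:kalman-posterior} in the linear-Gaussian case. Your observation that the conditional second moment is finite only for almost every $y$ (via the tower property) is care the paper leaves implicit, and it is what makes the uniqueness comparison meaningful.
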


\begin{proof}
Let $m(y)=\int x\,\pi(\dd x\mid y)$. Then
\[
\int \norm{x-a}^2\,\pi(\dd x\mid y)
=
\int \norm{x-m(y)}^2\,\pi(\dd x\mid y)
\;+\;
\norm{m(y)-a}^2,
\]
because the cross term
\[
2\int \inner{x-m(y)}{m(y)-a}\,\pi(\dd x\mid y)
\]
vanishes by definition of $m(y)$. The second term is uniquely minimized at $a=m(y)$.
\end{proof}

\Cref{prop:posterior-mean-quadratic} will be relevant later: a deterministic RL objective with squared-error loss targets a posterior mean, not the posterior law. In the Gaussian case these coincide with the Kalman analysis mean, but the distinction remains conceptually important.

%% file: 07_enkf.tex
\section{Ensemble Kalman filter as an ensemble approximation}
\label{sec:enkf}

The ensemble Kalman filter is best understood here as a Gaussian and finite-ensemble approximation to the forecast-to-analysis map $p_t^f\mapsto p_t^a$ in the linear-Gaussian setting, together with a moment closure in more general settings \citep{evensen2003,kelly2014}. This interpretation is exact only in specific senses; at finite ensemble size it is not the full Bayesian posterior law on state space.

\subsection{Gaussian surrogate update}

Given a forecast ensemble $\set{x_t^{f,(i)}}_{i=1}^{N}$ intended to approximate $p_t^f$, define the sample mean and covariance
\begin{align}
\hat m_t^f &:= \frac1N\sum_{i=1}^{N} x_t^{f,(i)}, \\
\hat P_t^f &:= \frac{1}{N-1}\sum_{i=1}^{N}\parens{x_t^{f,(i)}-\hat m_t^f}\parens{x_t^{f,(i)}-\hat m_t^f}\transpose.
\end{align}
If one replaces the true forecast law $p_t^f$ by the Gaussian surrogate $\mathcal N(\hat m_t^f,\hat P_t^f)$, then the exact Gaussian posterior update from \cref{thm:kalman-posterior} yields surrogate analysis parameters
\begin{align}
\hat K_t &:= \hat P_t^fH_t\transpose\parens{H_t\hat P_t^fH_t\transpose + R_t}^{-1}, \label{eq:sample-gain}\\
\hat m_t^a &:= \hat m_t^f + \hat K_t\parens{y_t-H_t\hat m_t^f}, \label{eq:sample-mean-update}\\
\hat P_t^a &:= \parens{\Id-\hat K_tH_t}\hat P_t^f. \label{eq:sample-cov-update}
\end{align}
Because $R_t\succ 0$, the matrix $H_t\hat P_t^fH_t\transpose+R_t$ is always symmetric positive definite, so the gain \cref{eq:sample-gain} is well defined even when the sample covariance $\hat P_t^f$ is singular. If $\hat P_t^f\succ 0$, then $\mathcal N(\hat m_t^a,\hat P_t^a)$ is exactly the Gaussian posterior for the surrogate forecast density $\mathcal N(\hat m_t^f,\hat P_t^f)$, equivalently the minimizer of the one-step functional \cref{eq:one-step-functional} over Gaussian densities built from those surrogate moments. If $\hat P_t^f$ is singular, the same mean and covariance formulas still define the EnKF moment update, but the corresponding Gaussian law is degenerate on an affine subspace and therefore lies outside the density-based variational family of \cref{sec:variational-posterior}.

\subsection{Perturbed-observation EnKF}

The stochastic or perturbed-observation EnKF updates ensemble members by
\begin{equation}
x_t^{a,(i)}
=
x_t^{f,(i)}+\hat K_t\parens{y_t+\varepsilon_t^{(i)}-H_tx_t^{f,(i)}},
\qquad
\varepsilon_t^{(i)}\stackrel{\text{i.i.d.}}{\sim}\mathcal N(0,R_t).
\label{eq:stochastic-enkf}
\end{equation}
The observation perturbations were introduced precisely so that the empirical covariance of the analysis ensemble is consistent, in expectation, with the Kalman covariance update \citep{burgers1998}. The method was developed and popularized as a practical large-scale sequential estimator built from Gaussian Bayesian update formulas by Evensen \cite{evensen2003}.

\begin{proposition}[Exact stochastic transport when the gain is exact]
\label{prop:exact-stochastic-transport}
Assume the exact linear-Gaussian setting of \cref{ass:linear-gaussian}. Let $X_t^f\sim\mathcal N(m_t^f,P_t^f)$ and $\varepsilon_t\sim\mathcal N(0,R_t)$ be independent, and define
\begin{equation}
X_t^a := X_t^f + K_t\parens{y_t+\varepsilon_t-H_tX_t^f},
\end{equation}
where $K_t$ is the exact Kalman gain \cref{eq:kalman-gain}. Then $X_t^a\sim \mathcal N(m_t^a,P_t^a)$, with $(m_t^a,P_t^a)$ given by \cref{eq:kalman-covariance-form,eq:kalman-mean-form}.
\end{proposition}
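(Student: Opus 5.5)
The plan is to exploit the fact that $X_t^a$ is an affine function of the jointly Gaussian vector $(X_t^f,\varepsilon_t)$. Hence $X_t^a$ is Gaussian, and it suffices to identify its mean and covariance with \cref{eq:kalman-mean-form,eq:kalman-covariance-form}. First rewrite
\[
X_t^a=(\Id-K_tH_t)X_t^f+K_t y_t+K_t\varepsilon_t .
\]
Since $X_t^f$ and $\varepsilon_t$ are independent Gaussians, the pair $(X_t^f,\varepsilon_t)$ is jointly Gaussian. Its image under this deterministic affine map (with $y_t$ fixed) is therefore Gaussian.

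\textbf{Mean.} Taking expectations and using $\E[\varepsilon_t]=0$ gives
\[
\E[X_t^a]=m_t^f+K_t(y_t-H_tm_t^f),
\]
which is $m_t^a$ by \cref{eq:kalman-mean-form}.

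\textbf{Covariance.} By independence the cross terms vanish, so
\[
\Cov(X_t^a)=(\Id-K_tH_t)P_t^f(\Id-K_tH_t)\transpose+K_tR_tK_t\transpose .
\]
This is the Joseph form, and the main step is to reduce it to $(\Id-K_tH_t)P_t^f$. Expanding gives
\[
(\Id-K_tH_t)P_t^f-P_t^fH_t\transpose K_t\transpose+K_t\parens{H_tP_t^fH_t\transpose+R_t}K_t\transpose .
\]
The definition \cref{eq:kalman-gain} gives $K_t(H_tP_t^fH_t\transpose+R_t)=P_t^fH_t\transpose$. The last term therefore equals $P_t^fH_t\transpose K_t\transpose$ and cancels the middle term, leaving $(\Id-K_tH_t)P_t^f=P_t^a$.

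The only point needing care is this algebraic cancellation, which uses the optimality of the gain. It is the reason the statement requires the \emph{exact} gain $K_t$: with any other gain the Joseph form would not collapse to $P_t^a$.

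Finally, $P_t^a$ is symmetric positive definite by \cref{thm:kalman-posterior}, so the law is the nondegenerate $\mathcal N(m_t^a,P_t^a)$, which is $p_t^a$.
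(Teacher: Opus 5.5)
Your proof is correct and follows the paper's route: Gaussianity because $X_t^a$ is an affine image of the jointly Gaussian pair $(X_t^f,\varepsilon_t)$, the mean by linearity, and the covariance by expanding the Joseph form and cancelling with $K_t(H_tP_t^fH_t\transpose+R_t)=P_t^fH_t\transpose$. Your extra remarks, that only the exact gain collapses the Joseph form and that $P_t^a\succ 0$ makes the limit law nondegenerate, are correct but not needed for the statement.
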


\begin{proof}
The variable $X_t^a$ is an affine image of a jointly Gaussian vector, so it is Gaussian. Its mean is
\[
\E[X_t^a]
=
m_t^f + K_t(y_t-H_tm_t^f)
=
m_t^a.
\]
Its covariance is
\begin{align*}
\Cov(X_t^a)
&=
\parens{\Id-K_tH_t}P_t^f\parens{\Id-K_tH_t}\transpose
+K_tR_tK_t\transpose \\
&=
P_t^f-K_tH_tP_t^f-P_t^fH_t\transpose K_t\transpose
+K_t\parens{H_tP_t^fH_t\transpose+R_t}K_t\transpose \\
&=
P_t^f-K_tH_tP_t^f
=
P_t^a,
\end{align*}
where the final identities use \cref{eq:kalman-gain}.
\end{proof}

\Cref{prop:exact-stochastic-transport} explains the origin of the EnKF analysis map. If one started with i.i.d.\ forecast particles and still used the exact deterministic gain $K_t$, the transformed particles would remain i.i.d.\ draws from the exact analysis posterior. What standard EnKF does instead is replace the exact gain $K_t$ and exact forecast moments by their ensemble estimates. That produces a practical finite-ensemble algorithm, but it is no longer literally the exact posterior law on $\R^n$.

\subsection{Finite ensemble versus infinite ensemble}

Several distinct statements must be kept separate.
\begin{enumerate}[label=\arabic*.]
\item \textbf{Exact Kalman posterior.} In the linear-Gaussian model, the posterior law is exactly $\mathcal N(m_t^a,P_t^a)$.
\item \textbf{Gaussian surrogate update.} If $\hat P_t^f\succ0$, then \cref{eq:sample-gain,eq:sample-mean-update,eq:sample-cov-update} are the exact Gaussian posterior formulas for the surrogate forecast density $\mathcal N(\hat m_t^f,\hat P_t^f)$. If $\hat P_t^f$ is singular, the same formulas still define a moment update, but not a nondegenerate posterior density on $\R^n$.
\item \textbf{Finite-$N$ perturbed-observation EnKF.} With the sample gain $\hat K_t$, the analysis ensemble
\[
\hat \nu_t^{a,N}:=\frac1N\sum_{i=1}^{N}\delta_{x_t^{a,(i)}}
\]
is a random empirical measure. At finite $N$ it is generally neither equal to the exact posterior law nor a sample of independent posterior draws.
\item \textbf{Finite-$N$ deterministic square-root EnKF.} Square-root or transform filters deterministically enforce the surrogate analysis mean and covariance, but the resulting empirical measure is still only an ensemble representation of selected Gaussian moments. Even in the linear-Gaussian setting, this is a finite-ensemble surrogate-law statement, not exact equality with the full posterior law on $\R^n$.
\item \textbf{Infinite-ensemble linear-Gaussian limit.} Under linear-Gaussian assumptions, i.i.d.\ initialization, and standard propagation hypotheses, the standard perturbed-observation EnKF satisfies $\hat m_t^f\to m_t^f$ and $\hat P_t^f\to P_t^f$, hence converges to the Kalman update as $N\to\infty$ \citep{mandel2011}. Deterministic square-root variants are designed to realize the same limiting Kalman moment update, but the exact large-ensemble theorem is variant-dependent and is not needed for the present hierarchy. This linear-Gaussian infinite-ensemble statement is the exactness result usually invoked in the EnKF literature.
\item \textbf{Nonlinear or non-Gaussian settings.} Even with $N\to\infty$, EnKF-type methods retain a Gaussian or moment-closure update structure and therefore generally do not recover the exact nonlinear non-Gaussian posterior \citep{evensen2003,kelly2014,vanleeuwen2019,reich2019}.
\end{enumerate}

\subsection{Deterministic square-root variants}

Deterministic square-root or transform filters avoid perturbed observations. Instead, they update the ensemble mean by the Kalman mean formula and apply a deterministic transform to the ensemble anomalies so that the empirical analysis covariance matches the target surrogate covariance \citep{whitakerhamill2002,tippett2003}. From the perspective of \cref{sec:variational-posterior,sec:linear-gaussian}, these methods implement the same Gaussian surrogate posterior update as the perturbed-observation EnKF, but by a deterministic ensemble transport rather than by randomized observation perturbations.

The exactness caveat remains the same. At finite ensemble size, a square-root EnKF matches the prescribed surrogate moments by construction, but that moment-matching property should not be confused with exact posterior recovery. The empirical measure of the transformed ensemble is not the full Bayesian posterior law on $\R^n$; it is an ensemble representation designed to reproduce selected Gaussian summary statistics, with exact posterior recovery only in the corresponding linear-Gaussian infinite-ensemble limit.

\subsection{Relation to controlled particle methods}
\label{subsec:controlled-particle}

The control viewpoint extends beyond the EnKF. In feedback particle filtering, particles are driven by a feedback control law chosen so that, at the mean-field level, their conditional distribution matches the true filtering distribution \citep{yang2013}. In this sense, the filtering problem is recast as an optimal-control problem on interacting particles. Controlled interacting particle systems include both the feedback particle filter and the EnKF as special cases of feedback-based Bayesian update mechanisms \citep{taghvaeimehta2023}.

This control interpretation provides a natural bridge to the present variational picture. The feedback control law in feedback particle filtering is designed so that the particle law matches the posterior, while the EnKF uses a linear-Gaussian control law determined by first and second moments. Reich's Schr\"odinger and transport perspective makes the same point from another angle: sequential data assimilation can be viewed as transporting a forecast measure to an analysis measure, with optimal-transport and Schr\"odinger-bridge ideas clarifying the relation between exact posterior transport and its Gaussian or ensemble approximations \citep{reich2019}. In this hierarchy, the EnKF is a particular controlled transport with Gaussian structure, not the general posterior transport itself.

%% file: 08_hammoud.tex
\section{RMSE-based RL data assimilation and Hammoud et al.}
\label{sec:hammoud}

The preceding sections provide a criterion for when an RL formulation is posterior-exact: the reward and regularization must combine to realize the likelihood-plus-KL objective from \cref{thm:rl-one-step,cor:policy-posterior}. This section uses that criterion to interpret existing RL-based data assimilation work, especially Hammoud et al.\ \cite{hammoud2024}. The claims here are limited to the objective described in the published paper, not to unreported implementation details.

\subsection{What the published objective supports}

Hammoud et al.\ \cite{hammoud2024} study data assimilation for the Lorenz--63 and Lorenz--96 systems using a stochastic deep RL policy that applies state corrections and generates ensembles of assimilated realizations. As stated in the article abstract, the agent optimizes a geometric return built from terms proportional to the negative root-mean-squared error between observations and corresponding forecast states, and the resulting method performs favorably against the EnKF on the reported test problems.

That is already a meaningful contribution. It shows that RL can learn useful nonlinear correction strategies, can be implemented with stochastic action policies, and can produce Monte Carlo ensembles rather than only single corrected states. It also suggests that RL policies can help in settings where EnKF's Gaussian update structure is restrictive.

\subsection{What RMSE rewards do not prove}

An RMSE-style reward does \emph{not} by itself imply that the learned policy samples from the Bayesian analysis posterior. To see why, consider the one-step entropy-regularized objective, interpreted over laws $q_t\ll p_t^f$ for which the displayed expectation and relative entropy are finite,
\begin{equation}
\inf_{q_t\ll p_t^f}
\left\{
\E_{q_t}\!\bracks{\lambda\,r(X_t;y_t)}
+\KL(q_t\|p_t^f)
\right\},
\label{eq:generic-gibbs-objective}
\end{equation}
for some measurable data-fit penalty $r(\cdot;y_t)$ such that
\[
Z_r:=\int p_t^f(x_t)e^{-\lambda r(x_t;y_t)}\dd x_t\in(0,\infty).
\]
Under the same finiteness logic as in \cref{thm:gibbs-control}, here encoded by $Z_r\in(0,\infty)$ together with finiteness of the displayed objective, the optimizer is
\begin{equation}
q_t^\star(x_t)
\propto
p_t^f(x_t)\exp\parens{-\lambda\,r(x_t;y_t)}.
\label{eq:gibbs-rmse}
\end{equation}
Therefore:
\begin{enumerate}[label=\arabic*.]
\item if $r(x_t;y_t)=-\log p(y_t\mid x_t)$, then $q_t^\star=p_t^a$, the Bayesian posterior;
\item if $r$ is Euclidean RMSE or any other mismatch metric, then $q_t^\star$ is generally a \emph{different} Gibbs law, associated with a pseudo-likelihood proportional to $\exp(-\lambda r)$.
\end{enumerate}
This observation does not diminish the practical value of the learned policy. It only says that posterior equivalence requires a specific objective, and RMSE alone is not that objective.

\begin{remark}[Squared Mahalanobis error is the special Bayesian case]
\label{rem:mahalanobis}
If the reward uses the quadratic Mahalanobis misfit
\[
r(x_t;y_t)=\frac12(y_t-H_tx_t)\transpose R_t^{-1}(y_t-H_tx_t),
\]
then $\exp(-r(x_t;y_t))$ is proportional to the Gaussian likelihood \citep{kalman1960,lorenc1986}. In that case, and only together with the correct KL regularization, the entropy-regularized optimizer coincides with the Bayesian posterior. The issue is therefore not whether an RL reward uses an ``error'' term, but whether that term is exactly the negative log-likelihood associated with the intended observation model.
\end{remark}

\subsection{What would be needed for a posterior-equivalence theorem}

To obtain a theorem of the form ``the RL solution equals the Bayesian analysis posterior,'' one needs at least the following ingredients.
\begin{enumerate}[label=\arabic*.]
\item A forecast or passive law $p_t^f$ or $p_0$ relative to which the controlled law is measured.
\item A reward term equal, up to additive constants, to the observation log-likelihood.
\item A KL or relative-entropy regularizer with the correct temperature, so that the induced Gibbs law is the desired posterior rather than a tempered or pseudo-posterior.
\item A policy or transport family rich enough to represent the target posterior, or an explicit statement that one is computing the reverse-KL projection of the posterior onto a restricted family.
\end{enumerate}
These conditions are exactly those of \cref{thm:rl-one-step,cor:policy-posterior}. Without them, a learned policy may still be an excellent estimator, but it is not automatically an exact posterior solver.

\subsection{Interpretive hierarchy for RL-based data assimilation}

The theory developed here suggests the following hierarchy.
\begin{enumerate}[label=\arabic*.]
\item \textbf{Deterministic squared-error RL.} The target is a point estimator, typically close to a posterior mean under quadratic loss.
\item \textbf{Stochastic RL with generic data-fit reward.} The target is a Gibbs law determined by the chosen reward, not necessarily the Bayesian posterior.
\item \textbf{KL-regularized RL with exact likelihood reward.}
The target is the Bayesian posterior or its reverse-KL projection onto the policy family.
\item \textbf{Gaussian or ensemble-restricted KL-regularized RL.} The target is a Gaussian or ensemble approximation of the posterior, yielding Kalman-like or EnKF-like updates.
\end{enumerate}
Hammoud et al.\ fit naturally into the second category based on the published objective description. Moving to the third category would require the control-as-inference structure to be built into the reward and regularizer explicitly.

%% file: 10_conclusion.tex
\section{Conclusion}
\label{sec:conclusion}

The manuscript isolates a single exact theorem chain behind several overlapping languages in data assimilation. The analysis posterior $p_t^a$ and smoothing posterior $p(\xtraj\mid \ytraj)$ satisfy exact likelihood-plus-KL identities and are the unique minimizers when those posterior laws are admissible; strong- and weak-constraint 4D-Var are MAP procedures under the stated Gaussian assumptions; KL-regularized control recovers the Bayesian posterior only when the passive dynamics, likelihood cost, temperature, and a restrictive representability condition on the policy class are matched correctly; and the Kalman filter and EnKF then appear as the exact linear-Gaussian specialization and its Gaussian or finite-ensemble approximation.

The payoff is precision. It prevents posterior-exact claims from being transferred to objectives that in fact target means, modes, surrogate Gaussians, reachable policy laws, or RMSE-driven pseudo-posteriors. That precision is not restrictive; it gives a clean design principle for new algorithms. Keep the posterior identity explicit at the level of the objective, state the admissibility and representability assumptions needed for exact recovery, and then say exactly which restricted family, transport class, or ensemble approximation is being used to make the problem computationally feasible.

\section*{Acknowledgements}
This research has been supported by the Gordon and Betty Moore Foundation’s Postdoctoral Fellowship. 

%% file: appendix.tex
\section{Supplementary proofs and remarks}

\subsection{Gaussian minimization details}
\label{app:gaussian-minimization}

For completeness, we record the derivative calculation used in \cref{prop:gaussian-functional}. Let
\begin{align*}
F(m,P)
:={}&
\frac12 (Hm-y)\transpose R^{-1}(Hm-y)
+\frac12\tr(H\transpose R^{-1}HP)
\nonumber\\
&\quad
+\frac12(m-m_f)\transpose P_f^{-1}(m-m_f)
+\frac12\tr(P_f^{-1}P)
-\frac12\log\det P.
\end{align*}
Then
\[
\nabla_m F
=
H\transpose R^{-1}(Hm-y)+P_f^{-1}(m-m_f),
\]
so the stationary condition $\nabla_mF=0$ gives
\[
\parens{P_f^{-1}+H\transpose R^{-1}H}m = P_f^{-1}m_f + H\transpose R^{-1}y.
\]
For the covariance variable, standard matrix calculus gives
\[
\nabla_P F
=
\frac12 H\transpose R^{-1}H + \frac12 P_f^{-1} - \frac12 P^{-1},
\]
and the stationary condition is therefore
\[
P^{-1}=P_f^{-1}+H\transpose R^{-1}H.
\]
These are the information-form Kalman equations. Strict convexity in $m$ follows from positive definiteness of $P_f^{-1}+H\transpose R^{-1}H$, while strict convexity in $P$ on the cone of symmetric positive definite matrices follows from the convexity of $\tr(AP)-\log\det P$ when $A$ is positive definite.

\subsection{MAP as a zero-variance limit}
\label{app:map-limit}

This subsection makes precise the heuristic from \cref{rem:dirac-map} under conditions that preserve absolute continuity. Let $\pi$ be a posterior density on $\R^d$ such that $\pi(x)>0$ for every $x\in\R^d$ and $\log \pi$ is continuous with at most quadratic growth:
\[
\abs{\log \pi(z)}\le C(1+\norm{z}^2)
\qquad\text{for all }z\in\R^d
\]
for some constant $C<\infty$. For $x\in\R^d$ and $\varepsilon>0$, let $q_{x,\varepsilon}=\mathcal N(x,\varepsilon \Id)$. Then
\[
\KL(q_{x,\varepsilon}\|\pi)
=
-\E_{q_{x,\varepsilon}}[\log \pi(Z)] - H(q_{x,\varepsilon}),
\]
where $H(q_{x,\varepsilon})$ denotes the differential entropy of $q_{x,\varepsilon}$. Since
\[
H(q_{x,\varepsilon}) = \frac d2\log(2\pi e\varepsilon)
\]
is independent of $x$, minimizing $\KL(q_{x,\varepsilon}\|\pi)$ over $x$ is equivalent to maximizing $\E_{q_{x,\varepsilon}}[\log \pi(Z)]$ over $x$. By continuity of $\log \pi$,
\[
\E_{q_{x,\varepsilon}}[\log \pi(Z)] \to \log \pi(x)
\qquad\text{as }\varepsilon\downarrow 0.
\]
Indeed, if $\Xi\sim\mathcal N(0,\Id)$ then $Z=x+\sqrt{\varepsilon}\,\Xi$, so $\log \pi(Z)\to\log \pi(x)$ almost surely. The quadratic-growth bound gives
\[
\abs{\log \pi(Z)}
\le
C\parens{1+2\norm{x}^2+2\varepsilon\norm{\Xi}^2},
\]
which is integrable for every fixed $x$ and $0<\varepsilon\le 1$. Dominated convergence therefore justifies the displayed limit.

Now define
\[
F_\varepsilon(x):=\E\bracks{\log \pi\parens{x+\sqrt{\varepsilon}\,\Xi}},
\]
so that minimizing $\KL(q_{x,\varepsilon}\|\pi)$ is equivalent to maximizing $F_\varepsilon(x)$. Let $\varepsilon_k\downarrow 0$ and let $x_{\varepsilon_k}$ be maximizers such that $x_{\varepsilon_k}\to x^\star$. Because this subsequence is bounded, there exists $M<\infty$ with $\norm{x_{\varepsilon_k}}\le M$ for all $k$. For every $k$ with $\varepsilon_k\le 1$,
\[
\abs{\log \pi\parens{x_{\varepsilon_k}+\sqrt{\varepsilon_k}\,\Xi}}
\le
C\parens{1+2\norm{x_{\varepsilon_k}}^2+2\varepsilon_k\norm{\Xi}^2}
\le
C\parens{1+2M^2+2\norm{\Xi}^2},
\]
and the right-hand side is integrable. Since
\[
x_{\varepsilon_k}+\sqrt{\varepsilon_k}\,\Xi\to x^\star
\qquad\text{almost surely},
\]
dominated convergence gives
\[
F_{\varepsilon_k}(x_{\varepsilon_k})\to \log \pi(x^\star).
\]
For any fixed $y\in\R^d$, the fixed-center argument above gives
\[
F_{\varepsilon_k}(y)\to \log \pi(y).
\]
Because $x_{\varepsilon_k}$ maximizes $F_{\varepsilon_k}$,
\[
F_{\varepsilon_k}(x_{\varepsilon_k})\ge F_{\varepsilon_k}(y)
\qquad\text{for every }k.
\]
Passing to the limit yields $\log \pi(x^\star)\ge \log \pi(y)$ for every $y$, so $x^\star$ is a MAP point of $\pi$. This is the precise continuous-state version of the common slogan that the point-mass restriction yields MAP. If the posterior has constrained support, the same conclusion requires support-preserving smooth approximants rather than full Gaussians.

\subsection{Matrix identities}
\label{app:matrix-identities}

The covariance-form Kalman update follows from the Woodbury identity
\[
\parens{A^{-1}+U C^{-1}V}^{-1}
=
A-AU\parens{C+VAU}^{-1}VA,
\]
applied with $A=P_t^f$, $U=H_t\transpose$, $C=R_t$, and $V=H_t$. This gives
\[
P_t^a
=
\parens{(P_t^f)^{-1}+H_t\transpose R_t^{-1}H_t}^{-1}
=
P_t^f - P_t^fH_t\transpose\parens{H_tP_t^fH_t\transpose+R_t}^{-1}H_tP_t^f.
\]
If $K_t$ is defined by \cref{eq:kalman-gain}, then
\[
P_t^a = (\Id-K_tH_t)P_t^f.
\]
The equivalent Joseph form,
\[
P_t^a
=
\parens{\Id-K_tH_t}P_t^f\parens{\Id-K_tH_t}\transpose + K_tR_tK_t\transpose,
\]
is obtained by expanding the right-hand side:
\[
\parens{\Id-K_tH_t}P_t^f\parens{\Id-K_tH_t}\transpose + K_tR_tK_t\transpose
=
P_t^f-K_tH_tP_t^f-P_t^fH_t\transpose K_t\transpose
+K_t\parens{H_tP_t^fH_t\transpose+R_t}K_t\transpose,
\]
and then using
\[
K_t\parens{H_tP_t^fH_t\transpose+R_t}=P_t^fH_t\transpose.
\]

\subsection{Continuous-time remarks}

The finite-horizon discrete-time identities in the main text are the cleanest setting for the posterior-as-variational theorem. Continuous-time analogues arise in several ways. In diffusion filtering with continuous observations, feedback particle filters choose a feedback control so that the conditional law of the particles matches the exact posterior in the mean-field limit \citep{yang2013}. From another viewpoint, Schr\"odinger bridge formulations and transport couplings interpret data assimilation as an entropy-regularized measure-transport problem between forecast and analysis laws \citep{reich2019}. These continuous-time theories reinforce the main message of the paper: posterior updates can be expressed as controlled measure transports, and approximate filters differ primarily by the restricted family of transports they permit.